\documentclass[11pt]{article}

\usepackage{booktabs}
\usepackage{relsize}
\usepackage{soul}
\usepackage{graphicx}
\usepackage{nameref}
\usepackage[shortlabels,inline]{enumitem}
\usepackage{empheq}
\usepackage[shortlabels]{enumitem}
\setlist[enumerate]{nosep}
\usepackage[doc]{optional}
\usepackage{xcolor}
\usepackage[colorlinks=true,
            linkcolor=refkey,
            urlcolor=lblue,
            citecolor=red]{hyperref}
\usepackage{float}
\usepackage{soul}
\usepackage{graphicx}
\definecolor{labelkey}{rgb}{0,0.08,0.45}
\definecolor{refkey}{rgb}{0,0.6,0.0}
\definecolor{Brown}{rgb}{0.45,0.0,0.05}
\definecolor{lime}{rgb}{0.00,0.8,0.0}
\definecolor{lblue}{rgb}{0.5,0.5,0.99}

 \usepackage{mathpazo}



\colorlet{hlcyan}{cyan!30}

\usepackage{stmaryrd}
\usepackage{amssymb}

\oddsidemargin -0.1cm
\textwidth  16.5cm
\topmargin  -0.1cm
\headheight 0.0cm
\textheight 21.2cm
\parindent  4mm
\parskip    10pt 
\tolerance  3000

\hyphenation{non-empty}

\makeatletter
\def\namedlabel#1#2{\begingroup
   \def\@currentlabel{#2}%
   \label{#1}\endgroup
}
\makeatother

\newcommand{\seppfour}{\setlength{\itemsep}{-4pt}}

\oddsidemargin -0.1cm
\textwidth  16.5cm
\topmargin  -0.1cm
\headheight 0.0cm
\textheight 21.2cm
\parindent  4mm
\parskip    10pt 
\tolerance  3000

\newcommand{\bx}{\ensuremath{\mathbf{x}}}

\newcommand{\thalb}{\ensuremath{\tfrac{1}{2}}}
\newcommand{\menge}[2]{\big\{{#1}~\big |~{#2}\big\}}

\newcommand{\fenv}[1]%
{\ensuremath{\,\overrightarrow{\operatorname{env}}_{#1}}}
\newcommand{\benv}[1]%
{\ensuremath{\,\overleftarrow{\operatorname{env}}_{#1}}}

\newcommand{\scal}[2]{\left\langle{#1},{#2}  \right\rangle}

\newcommand{\RR}{\ensuremath{\mathbb R}}
\newcommand{\ii}{\ensuremath{\mathrm i}}
\newcommand{\RP}{\ensuremath{\mathbb{R}_+}}
\newcommand{\RPP}{\ensuremath{\mathbb{R}_{++}}}
\newcommand{\RM}{\ensuremath{\mathbb{R}_-}}

\newcommand{\dom}{\ensuremath{\operatorname{dom}}}

\newcommand{\prox}{\ensuremath{\operatorname{Prox}}}
\newcommand{\intdom}{\ensuremath{\operatorname{int}\operatorname{dom}}\,}

\newcommand{\ran}{\ensuremath{{\operatorname{ran}}\,}}

\newcommand{\Id}{\ensuremath{\operatorname{Id}}}

\newcommand{\minf}{\ensuremath{-\infty}}
\newcommand{\pinf}{\ensuremath{+\infty}}

\newcommand{\by}{\ensuremath{\mathbf{y}}}

\newcommand{\bz}{\ensuremath{\mathbf{z}}}


%
{\begin{list}{}{%
\settowidth{\labelwidth}{\textrm{#1~}}%
\setlength{\leftmargin}{\labelwidth+\labelsep}}}
{\end{list}}
\usepackage{amsthm}
\usepackage[capitalize,nameinlink]{cleveref}
\crefname{lemma}{Lemma}{Lemmas}
\crefname{equation}{}{equations}
\crefname{figure}{Figure}{Figures}
\crefname{chapter}{Appendix}{chapters}
\crefname{item}{}{items}
\crefname{enumi}{}{}
\crefname{enumii}{}{}
\newtheorem{theorem}{Theorem}[section]

\newtheorem{corollary}[theorem]{Corollary}

\newtheorem{proposition}[theorem]{Proposition}

\newtheorem{example}[theorem]{Example}

\newtheorem{fact}[theorem]{Fact}
\newtheorem{remark}[theorem]{Remark}






\providecommand{\RR}{\mathbb{R}}
\providecommand{\CC}{\mathbb{C}}

\providecommand{\ran}{\operatorname{ran}}

\providecommand{\dom}{\operatorname{dom}}

\providecommand{\sign}{\operatorname{sign}}

\providecommand{\epi}{\operatorname{epi}}

\providecommand{\Id}{\operatorname{{ Id}}}

\providecommand{\ran}{\operatorname{ran}}

\providecommand{\Id}{\operatorname{Id}}

\providecommand{\RR}{\mathbb{R}}

\definecolor{myblue}{rgb}{.8, .8, 1}
  \newcommand*\mybluebox[1]{%
    \colorbox{myblue}{\hspace{1em}#1\hspace{1em}}}

\allowdisplaybreaks 

\begin{document}

\title{\textsc{
Real roots of real cubics and optimization}}
\author{
Heinz H.\ Bauschke\thanks{
Mathematics, University
of British Columbia,
Kelowna, B.C.\ V1V~1V7, Canada. E-mail:
\texttt{heinz.bauschke@ubc.ca}.},~
Manish\ Krishan Lal\thanks{
                 Mathematics, University of British Columbia, Kelowna, B.C.\ V1V~1V7, Canada.
                 E-mail: \texttt{manish.krishanlal@ubc.ca}},~
         and Xianfu\ Wang\thanks{
                 Mathematics, University of British Columbia, Kelowna, B.C.\ V1V~1V7, Canada.
                 E-mail: \texttt{shawn.wang@ubc.ca}.}
                 }

\date{February 20, 2023}

\maketitle

\vskip 8mm

\begin{abstract} 
The solution of the cubic equation has a century-long history; however, the usual presentation is geared towards applications in algebra and is somewhat inconvenient to use in optimization where frequently the main interest lies in real roots. 

In this note, we present the roots of the cubic in a form that makes them convenient to use and we also focus on information on the location of the real roots. Armed with this, we provide several applications in optimization where we compute Fenchel conjugates, proximal mappings and projections.
\end{abstract}

{\small
\noindent
{\bfseries 2020 Mathematics Subject Classification:}
{Primary 90C25; 
Secondary 12D10, 26C10, 90C26
}

\noindent {\bfseries Keywords:}
Convex quartic, 
cubic equation, 
cubic polynomial, 
Fenchel conjugate,
projection, 
proximal mapping,
root.
}

\section{Introduction}

The history of solving cubic equations is rich and centuries old; see, e.g.,
Confalonieri's recent book \cite{Confa} on Cardano's work. Cubics do also appear in convex and nonconvex optimization. 
However, treatises on solving the cubic often focus on the general complex case making the results less useful to optimizers. 
The purpose of this note is two-fold. We present a largely self-contained derivation of the solution of the cubic with an emphasis on usefulness to practitioners. We do not claim novelty of these results; however, the presentation appears to be particularly convenient. We then turn to novel results. We show how the formulas can be used to compute Fenchel conjugates and proximal mappings of some convex functions. We also discuss projections on convex and nonconvex sets. 

\subsection{Outline of the paper}

The paper is organized as follows. 
In \cref{subsec:facts}, 
we collect some facts on polynomials. 
\cref{sec:depcubic} contains a self-contained treatment of the depressed cubic; in turn, this leads quickly to counterparts for the general cubic in \cref{sec:gencubic}. 
\cref{sec:convquar} concerns convex quartics --- we compute their
Fenchel conjugates and proximal mappings. 
In \cref{sec:alpha/x}, we present a formula for the proximal mapping of the convex reciprocal function. 
An explicit formula for the projection onto the epigraph of a parabola is provided in \cref{sec:projepipar}. 
In \cref{sec:missing}, we derive a formula for the projection of certain points onto a rectangular hyperbolic paraboloid. 
In the final \cref{sec:perspective}, we revisit the proximal mapping of the closure of a perspective function.

\subsection{Some facts}

\label{subsec:facts}

We now collect some properties of polynomials that are well known;
as a reference, we recommend \cite{RahSch}.

\begin{fact}
\label{f:reproots}
Let $f(x)$ be a nonconstant complex polynomial and let $r\in \CC$ such that $f(r)=0$. 
Then the multiplicity of $r$ is is the smallest integer $k$ such that 
the $k$th derivative at $r$ is nonzero: $f^{(k-1)}(r)=0$ and $f^{(k)}(r)\neq 0$. 
When $k=1$, $2$, or $3$, 
then we say that $r$ is a simple, double, or triple root, respectively. 
\end{fact}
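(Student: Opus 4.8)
The plan is to bridge the algebraic notion of multiplicity with the analytic (derivative) condition via the finite Taylor expansion of a polynomial about $r$. First I would pin down the definition of multiplicity: $r$ has multiplicity $m$ precisely when $f(x)=(x-r)^m g(x)$ for some polynomial $g$ with $g(r)\neq 0$, equivalently when $(x-r)^m$ divides $f(x)$ but $(x-r)^{m+1}$ does not. Since $f(r)=0$ by hypothesis, such an $m\geq 1$ exists and is uniquely determined.

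Writing $n=\deg f$, I would then invoke the fact that a polynomial equals its own terminating Taylor polynomial about $r$,
\[
f(x)=\sum_{j=0}^{n}\frac{f^{(j)}(r)}{j!}(x-r)^j,
\]
which is an exact identity (not an approximation) because all derivatives of order exceeding $n$ vanish, leaving no remainder. The crux is the observation that the lowest power of $(x-r)$ occurring with a nonzero coefficient is exactly $(x-r)^k$, where $k$ is the smallest index with $f^{(k)}(r)\neq 0$. Factoring out that power yields $f(x)=(x-r)^k h(x)$ with $h(r)=f^{(k)}(r)/k!\neq 0$, which is precisely the assertion that the multiplicity equals $k$, and along the way it records that $f^{(k-1)}(r)=0$ while $f^{(k)}(r)\neq 0$.

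As an alternative I would start from $f(x)=(x-r)^m g(x)$ with $g(r)\neq 0$ and differentiate by the Leibniz product rule: for each order $j<m$ every term of $f^{(j)}$ retains a positive power of $(x-r)$ and hence vanishes at $r$, whereas at order $j=m$ the only term surviving at $x=r$ is the one in which all $m$ derivatives fall on $(x-r)^m$, producing $m!\,g(r)\neq 0$; this shows $f^{(j)}(r)=0$ for $j=0,\dots,m-1$ and $f^{(m)}(r)\neq 0$. There is no genuine obstacle in either route; the only point demanding a little care is justifying that the Taylor identity is a true polynomial equality, which is immediate since $f$ and the finite sum are polynomials of degree at most $n$ agreeing with all their derivatives at $r$. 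The rest is routine bookkeeping with coefficients or with the product rule.
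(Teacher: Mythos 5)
Your argument is correct: the terminating Taylor identity $f(x)=\sum_{j=0}^{n}\frac{f^{(j)}(r)}{j!}(x-r)^j$ (valid since $\CC$ has characteristic zero, and applicable because $f$ nonconstant guarantees $f^{(n)}(r)=n!\,a_n\neq 0$, so the smallest such $k$ exists) identifies the order of vanishing of $f$ at $r$ with the smallest $k$ for which $f^{(k)}(r)\neq 0$, and the Leibniz-rule alternative is equally sound. The paper states this only as a Fact with a reference to the literature and gives no proof of its own, so there is nothing to compare against; your proof is the standard one.
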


\begin{fact} {\bf (Vieta)}
\label{f:Vieta}
Suppose $f(x)=ax^3+bx^2+cx+d$ is a cubic polynomial (i.e., $a\neq 0$) with complex coefficients.
If $r_1,r_2,r_3$ denote the (possibly repeated and complex) roots of $f$, then
\begin{subequations}
\label{e:Vieta}
\begin{align}
r_1+r_2+r_3&= -\frac{b}{a}\\
r_1r_2+r_1r_3 + r_2r_3 &= \frac{c}{a}\\
r_1r_2r_3 &= -\frac{d}{a}.
\end{align}
\end{subequations}
Conversely, if $r_1,r_2,r_3$ in $\mathbb{C}$ satisfy \cref{e:Vieta},
then they are the (possibly repeated) roots of $f$. 
\end{fact}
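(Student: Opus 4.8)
The plan is to invoke the Fundamental Theorem of Algebra, which guarantees that over $\CC$ a cubic splits completely into linear factors. For the forward implication, I would begin from the factorization
\[
f(x) = a(x-r_1)(x-r_2)(x-r_3),
\]
which holds precisely because $r_1,r_2,r_3$ are the roots of $f$ listed with multiplicity (in the sense of \cref{f:reproots}) and $a$ is the leading coefficient. Expanding the product gives
\[
f(x) = a\big(x^3 - (r_1+r_2+r_3)x^2 + (r_1r_2+r_1r_3+r_2r_3)x - r_1r_2r_3\big).
\]
Matching this against $ax^3+bx^2+cx+d$ coefficient by coefficient yields $b = -a(r_1+r_2+r_3)$, $c = a(r_1r_2+r_1r_3+r_2r_3)$, and $d = -ar_1r_2r_3$; dividing each of these by $a\neq 0$ produces exactly \cref{e:Vieta}.

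For the converse, I would run the same computation in reverse. Given $r_1,r_2,r_3\in\CC$ satisfying \cref{e:Vieta}, I set $g(x) := a(x-r_1)(x-r_2)(x-r_3)$ and expand it as above. Substituting the three identities \cref{e:Vieta} into the expanded coefficients shows that $g$ and $f$ agree in every degree, so $g=f$ as polynomials. Since the roots of $g$ are visibly $r_1,r_2,r_3$ (with multiplicity), the same holds for $f$, which is the claim.

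I expect no genuine obstacle here: the whole argument is a single expand-and-compare, and the forward and converse statements are really two readings of the same polynomial identity relating the elementary symmetric functions of the roots to the normalized coefficients. The only points that warrant care are the initial appeal to the Fundamental Theorem of Algebra to secure the complete factorization, and the bookkeeping of multiplicities so that the list $r_1,r_2,r_3$ appearing in the factorization is consistent with the multiplicity notion of \cref{f:reproots}.
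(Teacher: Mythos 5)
Your proof is correct. The paper states this as a known \emph{fact} with a reference to \cite{RahSch} and gives no proof of its own, so there is nothing to compare against; your expand-and-compare argument (complete factorization via the Fundamental Theorem of Algebra, then matching coefficients, with the converse obtained by running the identity in the other direction) is the standard and entirely adequate justification.
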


\begin{fact}
\label{f:cubicstart}
Suppose $f(x)=ax^3+bx^2+cx+d$ is a cubic polynomial (i.e., $a\neq 0$) with real coefficients. Then $f$ has three (possibly complex) roots (counting multiplicity).
More precisely, exactly one of the following holds:
\begin{enumerate}
\item $f$ has exactly one real root which either is simple (and the two remaining roots are nonreal simple roots and conjugate to each other) or is a triple root. 
\item $f$ has exactly two distinct real roots: one is simple and the other double.
\item $f$ has exactly three distinct simple real roots. 
\end{enumerate}
\end{fact}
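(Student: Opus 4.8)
The plan is to begin from the three complex roots guaranteed by the fundamental theorem of algebra --- this is precisely the setting already taken for granted in \cref{f:Vieta} --- and then to organize the entire case distinction by the number of \emph{distinct real} roots. First I would record the two structural constraints that real coefficients impose. Since $f$ is cubic it has exactly three roots $r_1,r_2,r_3\in\CC$ counted with multiplicity. Because $a,b,c,d$ are real, for every $x\in\CC$ one has $f(\bar x)=\overline{f(x)}$, and the same identity holds for each derivative $f^{(j)}$ (which again has real coefficients); hence $\bar r$ is a root whenever $r$ is, and by \cref{f:reproots} the multiplicities of $r$ and $\bar r$ coincide. Consequently the nonreal roots occur in conjugate pairs of equal multiplicity, so the total multiplicity they contribute is even; since the grand total is the odd number $3$, the real roots counted with multiplicity number an odd amount, and in particular $f$ has at least one real root.

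Next I would let $m$ be the number of \emph{distinct} real roots. The degree bound gives $m\le 3$ and the previous paragraph gives $m\ge 1$, so $m\in\{1,2,3\}$; these three values are mutually exclusive and exhaustive, which is exactly what yields the asserted trichotomy. Each value of $m$ I would then settle by a short multiplicity count.

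For $m=3$ the three distinct real roots have positive integer multiplicities summing to $3$, forcing each to equal $1$; this is case (iii). For $m=2$, a nonreal conjugate pair would already supply two roots which, together with two distinct real roots of multiplicity at least one each, would push the total multiplicity to at least $4$, a contradiction; hence all roots are real, their two distinct values have multiplicities summing to $3$, and the only partition is $\{1,2\}$, giving one simple and one double real root, i.e.\ case (ii). For $m=1$, let $p$ be the unique real root. Either there is a nonreal root, in which case it and its conjugate form a pair that must be simple (multiplicity $2$ on each would already overshoot $3$), leaving $p$ with the remaining multiplicity $1$; or there is no nonreal root, in which case $p$ carries all the multiplicity and is a triple root. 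Both subcases are exactly case (i).

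The argument is essentially bookkeeping once conjugate pairing with equal multiplicity is available, so I expect the only point needing genuine care to be the claim that conjugation preserves multiplicity. I would handle this cleanly through \cref{f:reproots}: from $f^{(k-1)}(r)=0$ and $f^{(k)}(r)\neq 0$, conjugation yields $f^{(k-1)}(\bar r)=0$ and $f^{(k)}(\bar r)\neq 0$, so $\bar r$ has the same multiplicity $k$ as $r$. Everything else reduces to the parity observation above and the finitely many ways to partition the total multiplicity $3$ among the distinct roots.
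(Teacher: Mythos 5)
Your argument is correct and complete. Note, however, that the paper itself offers no proof of this statement: it is collected in \cref{subsec:facts} as a well-known \emph{Fact}, with the reader referred to the monograph of Rahman and Schmeisser, so there is no in-paper argument to compare against. What you supply is the standard self-contained derivation: conjugation commutes with a real-coefficient polynomial and all of its derivatives, so by \cref{f:reproots} nonreal roots occur in conjugate pairs of equal multiplicity; the parity argument then forces at least one real root, and the trichotomy follows by partitioning the total multiplicity $3$ over the $m\in\{1,2,3\}$ distinct real roots. The one point that genuinely needs care --- that $\bar r$ has the \emph{same} multiplicity as $r$, not merely that it is a root --- you handle correctly via the derivative characterization in \cref{f:reproots} (strictly, one should note that $f^{(j)}(\bar r)=\overline{f^{(j)}(r)}$ for \emph{all} $j<k$, not just $j=k-1$, so that $k$ remains the smallest index with nonvanishing derivative; this is immediate from your identity). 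The remaining case analysis is exhaustive and each subcase is settled by a valid multiplicity count, so the proof stands as a legitimate replacement for the omitted one.
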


\begin{remark}
We mention that the roots of a polynomial of a \emph{fixed} degree depend continuously on the coefficients --- see \cite[Theorem~1.3.1]{RahSch} 
for a precise statement and also the other results in \cite[Section~1.3]{RahSch}. 
\end{remark}

\section{The depressed cubic}

\label{sec:depcubic}

In this section, we study the \emph{depressed} cubic 
\begin{empheq}[box=\mybluebox]{equation}
g(z) := z^3+pz+q,
\quad\text{where}\;\;
p\in\RR \;\;\text{and}\;\; q\in\RR. 
\end{empheq}

\begin{theorem}
\label{t:roots}
We have 
\begin{equation}
g'(z)=3z^2+p
\;\;\text{and}\;\;
g''(z)=6z.
\end{equation}
Then 
$0$ is the only inflection point of $g$: 
$g$ is strictly concave on $\RM$ and $g$ is strictly convex on $\RP$.

Moreover, 
exactly one of the following cases occurs:
\begin{enumerate}
\item 
\label{t:roots1}
$p<0$: Set $z_\pm :=\pm\sqrt{-p/3}$. Then $z_-<z_+$, $z_\pm$ are two distinct simple roots of $g'$, 
$g$ is strictly increasing on $\left]\minf,z_-\right]$, 
$g$ is strictly decreasing on $[z_-,z_+]$,
$g$ is strictly increasing on $\left[z_+,\pinf\right[$. 
Moreover,
\begin{equation}
\label{e:221212c}
g(z_-)g(z_+)= 4\Delta, 
\quad\text{where}\;\;
\Delta := (p/3)^3+(q/2)^2,
\end{equation}
and this case trifurcates further as follows:

\begin{enumerate}
\item 
\label{t:roots1a} 
$\Delta>0$: Then $g$ has exactly one real root $r$. It is simple and
given by 
\begin{equation}
r := u_-+u_+,\quad
\text{where}\;\;
u_{\pm} := \sqrt[\mathlarger 3]{\frac{-q}{2}\pm \sqrt{\Delta}}. 
\end{equation}
The two remaining simple nonreal roots are 
\begin{equation}
-\thalb(u_-+u_+)\pm\ii\thalb\sqrt{3}(u_--u_+).
\end{equation}

\item 
\label{t:roots1b} 
$\Delta=0$: If $q>0$ (resp.\ $q<0$), then 
$2z_-$ (resp.\ $2z_+$) is a simple real root while $z_+$ (resp.\ $z_-$) is a double root.
Moreover, these cases can be combined into\footnote{Observe that this is the case when $\Delta\to 0^+$ in \cref{t:roots1a}.} 
\begin{equation}
\frac{3q}{p}= 2\sqrt[\mathlarger 3]{\frac{-q}{2}}\;\text{is a simple root of $g$}\;\;\text{and}\;\;
\frac{-3q}{2p}= -\sqrt[\mathlarger 3]{\frac{-q}{2}}\;\text{is a double root of $g$.}
\end{equation}

\item 
\label{t:roots1c} 
$\Delta<0$: 
Then $g$ has three simple real roots $r_-,r_0,r_+$ where
$r_-<z_-<r_0<z_+<r_+$.
Indeed, 
set
\begin{equation}
\theta := \arccos \frac{-q/2}{(-p/3)^{3/2}}, 
\end{equation}
which lies in 
$\left]0,\pi\right[$, and then define $z_0,z_1,z_2$ by 
\begin{equation}
z_k := 2(-p/3)^{1/2}\cos\Big(\frac{\theta+2k\pi}{3} \Big).
\end{equation}
Then $r_- = z_1$, $r_0=z_2$, and $r_+ = z_0$.
\end{enumerate}

\item 
\label{t:roots2}
$p=0$: Then $g'$ has a double root at $0$, and $g$ is strictly increasing on $\RR$.
The only real root is 
\begin{equation}
r := (-q)^{1/3}.
\end{equation}
If $q=0$, then $r$ is a triple root.
If $q\neq 0$, then $r$ is a simple root and the remaining nonreal simple roots 
are $-\thalb r \pm \ii\thalb\sqrt{3}r$.
\item 
\label{t:roots3}
$p>0$: Then $g'$ has no real root, $g$ is strictly increasing on $\RR$, and 
$g$ has exactly one real root $r$. It is simple and given by
\begin{equation}
r := u_-+u_+,\quad
\text{where}\;\;
u_{\pm} := \sqrt[\mathlarger 3]{\frac{-q}{2}\pm \sqrt{\Delta}} \;\;\text{and}\;\;
\Delta := (p/3)^3+(q/2)^2. 
\end{equation}
Once again, the two remaining simple nonreal roots are 
\begin{equation}
-\thalb(u_-+u_+)\pm\ii\thalb\sqrt{3}(u_--u_+).
\end{equation}
\end{enumerate}
\end{theorem}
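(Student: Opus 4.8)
The plan is to handle the elementary calculus first, then drive the case analysis by the sign of $p$, and finally produce explicit root formulas by Cardano's substitution in the one-real-root regimes and by a trigonometric substitution in the three-real-roots regime.

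First I would simply differentiate: $g'(z)=3z^2+p$ and $g''(z)=6z$. Since $g''$ vanishes only at $0$ and changes sign there, $0$ is the unique inflection point, with $g$ strictly concave on $\RM$ and strictly convex on $\RP$. The monotonicity is then read off from $g'$: for $p\geq 0$ we have $g'\geq 0$ (with equality only at isolated points), so $g$ is strictly increasing on $\RR$; for $p<0$, $g'$ has the two simple roots $z_\pm=\pm\sqrt{-p/3}$, producing the increase–decrease–increase pattern on $\left]\minf,z_-\right]$, $[z_-,z_+]$, $\left[z_+,\pinf\right[$. This fixes the critical-point skeleton for each of the three cases.

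Next, in the case $p<0$ I would establish the product identity. Using $z_\pm^2=-p/3$ one gets $g(z_\pm)=\tfrac{2p}{3}z_\pm+q$; multiplying these out with $z_-+z_+=0$ and $z_-z_+=p/3$ gives $g(z_-)g(z_+)=\tfrac{4p^3}{27}+q^2=4\Delta$. Because $z_-$ is a local maximizer and $z_+$ a local minimizer, the intermediate value theorem together with the monotonicity pattern converts the sign of $\Delta$ into the root count: $\Delta>0$ (both critical values the same sign) yields exactly one real root; $\Delta=0$ forces one critical value to vanish, producing a double root; and $\Delta<0$ gives $g(z_-)>0>g(z_+)$, hence three simple real roots with the interlacing $r_-<z_-<r_0<z_+<r_+$. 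For the explicit values in the one-real-root subcases I would run Cardano's substitution $z=u_-+u_+$ under the side condition $u_-u_+=-p/3$; expanding $z^3$ and matching coefficients shows $u_-^3,u_+^3$ are the roots $\tfrac{-q}{2}\pm\sqrt{\Delta}$ of the resolvent quadratic $t^2+qt-(p/3)^3$. When $\Delta\geq 0$ the real cube roots automatically satisfy the product condition, since $u_-^3u_+^3=-(p/3)^3$, giving the real root $r=u_-+u_+$; pairing the remaining cube roots via the primitive cube roots of unity and the same product condition yields the conjugate pair $-\thalb(u_-+u_+)\pm\ii\thalb\sqrt{3}(u_--u_+)$. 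This formula covers \cref{t:roots1a}, \cref{t:roots2} (where $p=0$, $\Delta=(q/2)^2$, and $z^3+q$ factors directly), and \cref{t:roots3} (where $p>0$ forces $\Delta>0$) uniformly. For \cref{t:roots1b} the two equal cube roots collapse to $\sqrt[\mathlarger 3]{-q/2}$, giving the double root, while \cref{f:Vieta} (sum of roots $=0$) pins the simple root as $\tfrac{3q}{p}=-2\times$(double root), the sign of $q$ selecting whether the double root is $z_+$ or $z_-$.

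Finally, for \cref{t:roots1c} with $\Delta<0$ the radical $\sqrt{\Delta}$ is imaginary (the casus irreducibilis), so the real-radical Cardano formula is inadequate and I would instead substitute $z=2\sqrt{-p/3}\cos\phi$. The triple-angle identity $4\cos^3\phi-3\cos\phi=\cos 3\phi$ reduces $g(z)=0$ to $\cos 3\phi=\frac{-q/2}{(-p/3)^{3/2}}$, whose right-hand side lies in $\left]-1,1\right[$ precisely because $\Delta<0$; hence $\theta$ is well defined in $\left]0,\pi\right[$ and $\phi=(\theta+2k\pi)/3$ produces $z_0,z_1,z_2$. Sorting by the location of $(\theta+2k\pi)/3$ within $[0,\pi]$ (and hence by the monotonicity of cosine there) gives $r_-=z_1<r_0=z_2<r_+=z_0$. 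I expect the two delicate points to be the correct pairing of the complex cube roots in Cardano's step, so that exactly one real root and one genuinely conjugate pair emerge, and the clean justification of switching to the trigonometric form exactly when $\Delta<0$, including verifying the ordering of the three trigonometric roots against the interlacing already forced by the critical values.
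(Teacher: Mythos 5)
Your proposal is correct and follows essentially the same route as the paper: the same critical-value analysis via $g(z_-)g(z_+)=4\Delta$, Cardano's substitution with the condition $u_-u_+=-p/3$ for the one-real-root cases, and the triple-angle trigonometric substitution when $\Delta<0$. The only cosmetic differences are that you derive the Cardano formula from the resolvent quadratic and use Vieta to locate the simple root when $\Delta=0$, whereas the paper verifies the same formulas by direct substitution.
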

\begin{proof}
Except for the formulas for the roots, all statements on $g$ follow from standard calculus.

\cref{t:roots1a}: 
Because $\Delta>0$ and $g$ is strictly decreasing on $[z_-,z_+]$, 
it follows from \cref{e:221212c}
that $g$ has the same sign on $[z_-,z_+]$ and so $g$ has no root in that interval.
Now $g$ is strictly increasing on $\left]\minf,z_-\right]$ and on 
$\left[z_+,\pinf\right[$; hence, 
$g$ has exactly one real root $r$ and it lies outside $[z_-,z_+]$.
Note that $r$ must be simple because the roots of $g'$ are $z_\mp$ and 
$r\neq z_\mp$. 
Note that $u_-<u_+$. 
Next,
$u_-^3u_+^3
= (q/2)^2-\Delta = -(p/3)^3
$
and so 
\begin{equation} 
\label{e:221212a}
u_-u_+ = -p/3.
\end{equation}
Also,
\begin{equation}
\label{e:221212b}
u_-^3+u_+^3 = \frac{-q}{2}-\sqrt{\Delta}+\frac{-q}{2}+\sqrt{\Delta}=-q. 
\end{equation}
Hence
\begin{align*}
g(r) &= r^3+pr+q\\
&=(u_-+u_+)^3+p(u_-+u_+)+q\\
&=u_-^3 + u_+^3 +3u_-u_+(u_-+u_+)+p(u_-+u_+)+q\\
&=\big(u_-^3 + u_+^3 \big) + (3u_-u_++p)(u_-+u_+)+q\\
&= -q + \big(3(-p/3)+p\big)(u_-+u_+)+q \tag{using \cref{e:221212a} and \cref{e:221212b}}\\
&= 0
\end{align*}
as claimed. 
Observe that we only need the properties \cref{e:221212a} and \cref{e:221212b}
about $u_-,u_+$ to conclude that $u_-+u_+$ is a root of $g$.
This observation leads us quickly to the two remaining complex roots:
First, denote the primitive 3rd root of unity by $\omega$, i.e., 
\begin{equation}
\label{e:prim3root}
\omega := \exp(2\pi \ii/3) = \cos(2\pi/3)+\ii\sin(2\pi/3)=-\thalb +\ii\thalb\sqrt{3}. 
\end{equation}
Then $\omega^2 = \overline{\omega} = -\thalb -\ii\thalb\sqrt{3}$ 
and $\omega^3 = \overline{\omega}^3 = 1$. 
Now set
\begin{equation*}
v_- := \omega u_- \;\;\text{and}\;\; v_+ := \omega^2u_+ = \overline{\omega}u_+.
\end{equation*}
Then $v_-v_+(\omega u_-)=(\omega^2u_+)=\omega^3u_-u_+=u_-u_+=-p/3$ 
by \cref{e:221212a}, and 
$v_-^3+v_+^3=(\omega u_-)^3+(\omega^2u_+)^3 =  \omega^3u_-^3+\omega^6u_+^3
=u_-^3+u_+^3=-q$ by \cref{e:221212a}.
Hence 
\begin{align*}
v_-+v_+ 
&= 
\omega u_- + \overline{\omega}u_+\\
&=\big(-\thalb +\ii\thalb\sqrt{3}\big)u_- + \big(-\thalb -\ii\thalb\sqrt{3}\big)u_+\\
&= -\thalb(u_-+u_+)+\ii\thalb\sqrt{3}(u_--u_+) 
\end{align*}
and its conjugate are the remaining simple complex roots of $g$.

\cref{t:roots1b}:
From \cref{e:221212c}, it follows that $z_-$ or $z_+$ is a root of $g$.
In view of \cref{f:reproots} and $g'(z_-)=g'(z_+)$, it follows that one of 
$z_-,z_+$ is at least a double root, but not both; moreover, it cannot be a triple root 
because $0$ is the only root of $g''$ and $z_-<0<z_+$. 
Hence exactly one of $z_-,z_+$ is a double root. 
To verify the remaining parts, we first define
\begin{equation*}
r_1 := \frac{3q}{p}\;\;\text{and}\;\;
r_2 := \frac{-3q}{2p}. 
\end{equation*}
Because $\Delta=0$, it follows that $4p^3+27q^2=0$.
Hence
\begin{align*}
g(r_1)
&= r_1^3+pr_1+q
= \frac{27q^3}{p^3}+\frac{3pq}{p}+q
= \frac{27q^3}{p^3}+4q
= \frac{q}{p^3}\big(27q^2+4p^3 \big)
= 0
\end{align*}
and
\begin{align*}
g(r_2)
&= r_2^3+pr_2+q
= \frac{-27q^3}{8p^3}+\frac{-3pq}{2p}+q
= \frac{-27q^3}{8p^3}-\frac{q}{2}
= \frac{-q}{8p^3}\big(27q^2+4p^3 \big)
= 0.
\end{align*}
The assumption that $\Delta=0$ readily yields
\begin{equation*}
p = \frac{-3^{1/3}q^{2/3}}{2^{2/3}}\;\;\text{and}\;\;
|q| = \frac{2(-p)^{3/2}}{3^{3/2}}.
\end{equation*}
Hence
\begin{equation*}
r_1 = 3q p^{-1}
= 3q (-1)3^{-1/3}q^{-2/3}2^{2/3} = 2^{2/3}(-q)^{1/3}
\end{equation*}
and
\begin{equation*}
r_2 = -3q 2^{-1}p^{-1}
= -3q 2^{-1}(-1)3^{-1/3}q^{-2/3}2^{2/3} = -2^{-1/3}(-q)^{1/3}
\end{equation*}
as claimed. 

If $q>0$, then 
\begin{equation*}
r_1 = \frac{3q}{p} = \frac{3\cdot 2(-p)^{3/2}}{3^{3/2}p}=-2(-p/3)^{1/2}=2z_-
\end{equation*}
and
\begin{equation*}
r_2 = \frac{-3q}{2p}=-\thalb\frac{3q}{p}=-\thalb r_1 = -\thalb 2z_-=z_+.
\end{equation*}
Similarly, if $q<0$, then 
$r_1=2z_+$ and $r_2=z_-$.

No matter the sign of $q$, we have $r_2\in\{z_-,z_+\}$ and thus $g'(r_2)=0$, i.e., 
$r_2$ is the double root. 

\cref{t:roots1c}: 
In view of \cref{e:221212c}, $g(z_-)$ and $g(z_+)$ have opposite signs.
Because $g$ is strictly decreasing on $[z_-,z_+]$, it follows that 
$g(z_-)>0>g(z_+)$. 
Hence there is at least on real root $r_0$ in $\left]z_-,z_+\right[$.
On the other hand, $g$ is strictly increasing on $\left]\minf,z_-\right]$ 
and on $\left[z_+,\pinf\right[$ which yields further roots $r_-$ and $r_+$ as announced. Having now three real roots, they must all be simple.

Next, note that $\Delta<0$ 
$\Leftrightarrow$
$0\leq (q/2)^2<-(p/3)^3 = (-p/3)^3$
$\Leftrightarrow$
$0\leq (q/2)^2/(-p/3)^3<1$
$\Leftrightarrow$
$0\leq (|q|/2)/(-p/3)^{3/2}<1$
$\Leftrightarrow$
$-1<(-q/2)/(-p/3)^{3/2}<1$.
It follows that 
\begin{equation}
\label{e:221213b}
\theta = \arccos \frac{-q/2}{(-p/3)^{3/2}} \in \left]0,\pi\right[
\end{equation}
as claimed.
For convenience, we set, for $k\in\{0,1,2\}$, 
\begin{equation}
\theta_k := \frac{\theta+2k\pi}{3};
\quad\text{hence,}\;\;
z_k = 2(-p/3)^{1/2}\cos(\theta_k). 
\end{equation}
Recall that $0<\theta<\pi$, which allows us to draw three conclusions:
\begin{subequations}
\begin{align}
0<\theta_0=\theta/3<\pi/3 
&\Rightarrow 
1>\cos(\theta_0)=\cos(\theta/3)>1/2;\\
2\pi/3<\theta_1=(\theta+2\pi)/3<\pi
&\Rightarrow 
-1/2 > \cos(\theta_1)=\cos((\theta+2\pi)/3) > -1;\\
4\pi/3<\theta_2 = (\theta+4\pi)/3<5\pi/3 
&\Rightarrow 
-1/2 < \cos(\theta_2)=\cos((\theta+2\pi)/3) < 1/2.
\end{align}
\end{subequations}
Hence $\cos(\theta_1)<\cos(\theta_2)<\cos(\theta_0)$ and thus 
\begin{equation}
z_1<z_2<z_0. 
\end{equation}
All we need to do is to verify that each $z_k$ is actually a root of $g$.
To this end, observe first that 
the triple-angle formula for the cosine 
(see, e.g., \cite[Formula~4.3.28 on page~72]{AS}) yields
\begin{subequations}
\label{e:221213a}
\begin{align}
\cos^3(\theta_k)
&=\frac{3\cos(\theta_k)+\cos(3\theta_k)}{4}
= \frac{3\cos(\theta_k)+\cos(\theta+2k\pi)}{4}\\
&= \frac{3\cos(\theta_k)+\cos(\theta)}{4}. 
\end{align}
\end{subequations}
Then 
\begin{align*}
g(z_k) &= 
z_k^3+pz_k +q\\
&=
8(-p/3)^{3/2}\cos^3(\theta_k)+p2(-p/3)^{1/2}\cos(\theta_k)+q\\
&= 
2(-p/3)^{3/2}\big(3\cos(\theta_k)+\cos(\theta) \big)+2(-p/3)^{1/2}p\cos(\theta_k)+q
\tag{using \cref{e:221213a}}\\
&=2(-p/3)^{1/2}\cos(\theta_k)\big(3(-p/3)+p\big)+2(-p/3)^{3/2}\cos(\theta)+q\\
&=2(-p/3)^{3/2}\cos(\theta)+q\\
&= 
2(-p/3)^{3/2}\frac{-q/2}{(-p/3)^{3/2}}
+ q\tag{using \cref{e:221213b}}\\
&= 0,
\end{align*}
and this completes the proof for this case.

\cref{t:roots2}: If $q=0$, then $g(z)=z^3$ so $z=0$ is the only root of $g$ and it is of multiplicity $3$. Thus we assume that $q\neq 0$. 
Then $g(z)=0$ $\Leftrightarrow$ $z^3+q =0$
$\Leftrightarrow$ $z^3=-q$
$\Rightarrow$ $z=(-q)^{1/3}\neq 0$.  
Because $g$ is strictly increasing on $\RR$, $r:= (-q)^{1/3}$ is the only real root of $g$. 
Because $g'$ has only one real root, namely $0$, it follows that 
$g'(r)\neq 0$ and so $r$ is a simple root. 
Denoting again by $\omega$ the primitive 3rd root of unity (see \cref{e:prim3root}), 
it is clear
that the remaining complex (simple) roots
are $\omega r$ and $\overline{\omega}r$ as claimed.

\cref{t:roots3}: 
Note that $\Delta \geq (p/3)^3>0$ because $p>0$. 
The fact that $r$ is a root is shown exactly as in \cref{t:roots1a}.
It is simple because $g'$ has no real roots,
and $r$ is unique because $g$ is strictly increasing.
The complex roots are derived exactly as in \cref{t:roots1a}. 
\end{proof}

We now provide a conise version of \cref{t:roots}:

\begin{corollary} {\bf (trichotomy)}
\label{c:roots}
Set $\Delta := (p/3)^3+(q/2)^2$.
Then exactly one of the following holds:
\begin{enumerate}
\item $p=0$ or $\Delta>0$: Then $g$ has exactly one real root and it is given by
\begin{equation}
\sqrt[\mathlarger 3]{\frac{-q}{2}+ \sqrt{\Delta}}
+
\sqrt[\mathlarger 3]{\frac{-q}{2}-\sqrt{\Delta}}. 
\end{equation}
\item $p<0$ and $\Delta=0$: 
Then $g$ has exactly two real roots which are given by
\begin{equation}
\frac{3q}{p}= 2\sqrt[\mathlarger 3]{\frac{-q}{2}}
\;\;\text{and}\;\;
\frac{-3q}{2p}= -\sqrt[\mathlarger 3]{\frac{-q}{2}}. 
\end{equation}
\item $\Delta<0$:
Then $g$ has exactly three real roots $z_0,z_1,z_2$ which are given by 
\begin{equation}
z_k := 2(-p/3)^{1/2}\cos\Big(\frac{\theta+2k\pi}{3} \Big), 
\quad\text{where}\;\;
\theta := \arccos \frac{-q/2}{(-p/3)^{3/2}},
\end{equation}
and where $z_1<z_2<z_0$.
\end{enumerate}
\end{corollary}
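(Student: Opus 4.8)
The plan is to obtain the corollary directly from \cref{t:roots} by regrouping its five cases according to the sign of $\Delta$, so that essentially no new computation is needed beyond one reconciliation of formulas. First I would check that the three listed conditions are exhaustive and mutually exclusive. Since $\Delta\in\RR$, exactly one of $\Delta>0$, $\Delta=0$, $\Delta<0$ holds. Moreover, if $p>0$ then $\Delta=(p/3)^3+(q/2)^2>0$, and if $p=0$ then $\Delta=(q/2)^2\geq 0$; consequently $\Delta<0$ forces $p<0$, and $\Delta=0$ forces $p\leq 0$. This already shows that Case~(iii) of the corollary coincides with the hypothesis of \cref{t:roots1c}, and that the stratum $\Delta=0$ splits cleanly into the subcase $p=0$ (where necessarily $q=0$, giving the triple root of \cref{t:roots2}) and the subcase $p<0$ (which is precisely \cref{t:roots1b}).

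With the partition in hand, Cases~(ii) and~(iii) are immediate transcriptions. For Case~(ii) I would simply quote the two root formulas established in \cref{t:roots1b}, namely the simple root $3q/p=2\sqrt[3]{-q/2}$ and the double root $-3q/(2p)=-\sqrt[3]{-q/2}$. For Case~(iii) I would quote \cref{t:roots1c} verbatim, including the admissibility of $\theta=\arccos\bigl((-q/2)/(-p/3)^{3/2}\bigr)$, the trigonometric formula for $z_0,z_1,z_2$, and the ordering $z_1<z_2<z_0$.

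The only genuine work lies in Case~(i), where the single Cardano-type formula $u_-+u_+=\sqrt[3]{-q/2+\sqrt{\Delta}}+\sqrt[3]{-q/2-\sqrt{\Delta}}$ must simultaneously cover the subcases $p>0$ (\cref{t:roots3}), $p<0$ with $\Delta>0$ (\cref{t:roots1a}), and $p=0$ (\cref{t:roots2}). The first two are literally the conclusions of \cref{t:roots3,t:roots1a}, so nothing is required there. The hard part will be verifying that the same formula reduces to the root $(-q)^{1/3}$ recorded for \cref{t:roots2}, since the two expressions look quite different. I would do this by observing that when $p=0$ one has $\Delta=(q/2)^2$, hence $\sqrt{\Delta}=|q|/2$; then one of $u_\pm$ is $\sqrt[3]{0}=0$ while the other is $\sqrt[3]{-q}$, independently of the sign of $q$, so that $u_-+u_+=(-q)^{1/3}$ in every case (including $q=0$, which yields the triple root $0$). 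This reconciliation is where care is needed; once it is in place, the rest of the proof is bookkeeping that follows automatically from the case partition established above.
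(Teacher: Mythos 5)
Your proposal is correct and matches the paper's treatment: the corollary is stated there as an immediate regrouping of the cases of \cref{t:roots} (the paper offers no separate proof), and your case partition, the observation that $\Delta<0$ forces $p<0$ while $p>0$ forces $\Delta>0$, and the reconciliation $u_-+u_+=(-q)^{1/3}$ when $p=0$ via $\sqrt{\Delta}=|q|/2$ are exactly the verifications needed. Nothing is missing.
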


\section{The general cubic}

\label{sec:gencubic}
In this section, we turn to the general cubic
\begin{empheq}[box=\mybluebox]{equation}
\label{e:gencubic}
f(x) := ax^3+bx^2+cx+d, 
\quad\text{where}\;\;
a,b,c,d\;\text{are in}\;\RR\;\text{and}\; a>0.
\end{empheq}
(The case $a<0$ is treated similarly.)
Note that $f''(x)=6ax+2b$ has exactly one zero, namely 
\begin{equation}
\label{e:defx0}
x_0 := \frac{-b}{3a}.
\end{equation}
The change of variables
\begin{equation}
\label{e:charvar}
x = z+x_0 
\end{equation}
leads to the well known depressed cubic
\begin{equation}
\label{e:defpq}
g(z) := z^3 + pz+q, \;\;\text{where}\;\; p := \frac{3ac-b^2}{3a^2}
\;\;\text{and}\;\; q := \frac{27a^2d+2b^3-9abc}{27a^3}
\end{equation}
which we reviewed in \cref{sec:depcubic}. 
Here $ag(z)=f(x)=f(z+x_0)$ so 
the roots of $g$ are precisely those of $f$, translated by $x_0$:
\begin{equation}
\text{$x$ is a root of $f$ $\Leftrightarrow$ $x-x_0$ is a root of $g$.}
\end{equation}
So all we need to do is find the roots of $g$, and then add $x_0$ to them, to obtain
the roots of $f$. Because the change of variables \cref{e:charvar} is linear,
multiplicity of the roots are preserved. 
Translating some of the results from \cref{t:roots} for $g$ to $f$ gives the following:

\begin{theorem}
\label{t:genroots}
$f$ 
is strictly concave on $\left]\minf,x_0\right]$ and 
is strictly convex on $\left[x_0,\pinf\right[$, where $x_0$ is the unique inflection point of $f$ defined in \cref{e:defx0}. 
Recall the definitions of $p,q$ from \cref{e:defpq} and also set 
\begin{equation}
\Delta := (p/3)^3+(q/2)^2 = \frac{(3ac-b^2)^3}{(9a^2)^3}
+ \frac{(27a^2d+2b^3-9abc)^2}{(54a^3)^2}.
\end{equation}
Then exactly one of the following cases occurs:
\begin{enumerate}
\item 
\label{t:genroots1}
\fbox{$b^2>3ac \Leftrightarrow p<0$}\,: Set $x_\pm :=(-b\pm\sqrt{b^2-3ac})/(3a)$. 
Then $x_\pm$ are two distinct simple roots of $f'$, 
$f$ is strictly increasing on $\left]\minf,x_-\right]$, 
$f$ is strictly decreasing on $[x_-,x_+]$,
$f$ is strictly increasing on $\left[x_+,\pinf\right[$. 
This case trifurcates further as follows:

\begin{enumerate}
\item 
\label{t:genroots1a} 
\fbox{$\Delta>0$}\,: Then $f$ has exactly one real root; moreover, it
is simple and given by
\begin{equation}
x_0+u_-+u_+,\quad
\text{where}\;\;
u_{\pm} := \sqrt[\mathlarger 3]{\frac{-q}{2}\pm \sqrt{\Delta}}. 
\end{equation}
The two remaining simple nonreal roots are 
$x_0-\thalb(u_-+u_+)\pm\ii\thalb\sqrt{3}(u_--u_+)$.

\item 
\label{t:genroots1b} 
\fbox{$\Delta=0$}\,: 
Then $f$ has two distinct real roots: The simple root is
\begin{equation}
x_0+\frac{3q}{p} = 
x_0+2\sqrt[\mathlarger 3]{\frac{-q}{2}} = 
\frac{4 a b c -b^3 -9 a^{2} d}{a{\left(b^{2} - 3  a c\right)} } 
\end{equation}
and the double root is
\begin{equation}
x_0-\frac{3q}{2p} = x_0 -\sqrt[\mathlarger 3]{\frac{-q}{2}} = 
\frac{9ad-bc}{2(b^{2} - 3  a c)}.
\end{equation}
\item 
\label{t:genroots1c} 
\fbox{$\Delta<0$}\,: 
Then $f$ has three simple real roots $r_-,r_0,r_+$ where
$r_-<x_-<r_0<x_+<r_+$.
Indeed, 
set
\begin{equation}
\theta := \arccos \frac{-q/2}{(-p/3)^{3/2}}, 
\end{equation}
which lies in 
$\left]0,\pi\right[$, and then define $y_0,y_1,y_2$ by 
\begin{equation}
y_k := x_0+2(-p/3)^{1/2}\cos\Big(\frac{\theta+2k\pi}{3} \Big).
\end{equation}
Then $r_- = y_1$, $r_0=y_2$, and $r_+ = y_0$.
\end{enumerate}
\item 
\label{t:genroots2}
\fbox{$b^2=3ac \Leftrightarrow p=0$}\,: 
Then $f$ is strictly increasing on $\RR$ and its
only real root is 
\begin{equation}
r := x_0 + (-q)^{1/3}.
\end{equation}
If $q=0$, then $r$ is a triple root.
If $q\neq 0$, then $r$ is a simple root and the remaining nonreal simple roots 
are $x_0-\thalb (-q)^{1/3} \pm \ii\thalb\sqrt{3}(-q)^{1/3}$.
\item 
\label{t:genroots3}
\fbox{$b^2<3ac \Leftrightarrow p>0$}\,: 
Then $f$ is strictly increasing on $\RR$, and 
$f$ has exactly one real root; moreover, it is simple and given by
\begin{equation}
x_0+u_-+u_+,\quad
\text{where}\;\;
u_{\pm} := \sqrt[\mathlarger 3]{\frac{-q}{2}\pm \sqrt{\Delta}}.
\end{equation}
The two remaining simple nonreal roots are 
$x_0-\thalb(u_-+u_+)\pm\ii\thalb\sqrt{3}(u_--u_+)$.
\end{enumerate}
\end{theorem}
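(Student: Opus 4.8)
The plan is to derive everything from the depressed-cubic result \cref{t:roots} through the change of variables $x = z + x_0$ recorded in \cref{e:charvar} and \cref{e:defpq}. First I would confirm the identity $a\,g(z) = f(z+x_0)$ by expanding $f(z+x_0)$, checking that the coefficient of $z^2$ cancels (this is exactly the choice $x_0 = -b/(3a)$) and that the remaining coefficients are the stated $p$ and $q$. Since $a>0$, dividing by $a$ does not change the zero set, and the affine map $z\mapsto z+x_0$ is a bijection of $\RR$ (and of $\CC$) that preserves multiplicities by \cref{f:reproots}; hence $x$ is a root of $f$ of a given multiplicity if and only if $x-x_0$ is a root of $g$ of the same multiplicity. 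Every conclusion of \cref{t:roots} about the number, reality, and multiplicity of the roots of $g$ therefore transfers to $f$, and each root formula for $f$ is obtained by adding $x_0$ to the corresponding formula for $g$.

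The convexity statement is independent of the root analysis: $f''(x) = 6ax+2b$ vanishes only at $x_0 = -b/(3a)$, and because $a>0$ we have $f''(x)<0$ for $x<x_0$ and $f''(x)>0$ for $x>x_0$, giving strict concavity on $\left]\minf,x_0\right]$ and strict convexity on $\left[x_0,\pinf\right[$.

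Next I would set up the case split. From $p=(3ac-b^2)/(3a^2)$ one reads off immediately that $p<0 \Leftrightarrow b^2>3ac$, that $p=0 \Leftrightarrow b^2=3ac$, and that $p>0 \Leftrightarrow b^2<3ac$, matching the three boxed conditions. For the critical points in case \cref{t:genroots1}, I would compute $f'(x)=3ax^2+2bx+c$ and solve to get $x_\pm = (-b\pm\sqrt{b^2-3ac})/(3a)$, then verify $x_\pm = x_0+z_\pm$ with $z_\pm=\pm\sqrt{-p/3}$ as in \cref{t:roots1}; the stated monotonicity intervals for $f$ are simply those for $g$ shifted by $x_0$. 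With this dictionary in place, each subcase \cref{t:genroots1a}, \cref{t:genroots1b}, \cref{t:genroots1c}, \cref{t:genroots2}, and \cref{t:genroots3} is the corresponding subcase of \cref{t:roots} with $x_0$ added to every real and complex root, and with the chain $z_-<r_0<z_+$ translated into $x_-<r_0<x_+$.

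The only genuinely computational point --- and the main obstacle --- is producing the two rational expressions in $a,b,c,d$ for the simple and double roots in case \cref{t:genroots1b}. Here I would substitute $x_0=-b/(3a)$ together with the explicit $p,q$ from \cref{e:defpq} into $x_0+3q/p$ and $x_0-3q/(2p)$, clear denominators, and simplify. For the simple root this yields $(4abc-b^3-9a^2d)/(a(b^2-3ac))$ and for the double root $(9ad-bc)/(2(b^2-3ac))$; both denominators are nonzero precisely because $p\neq 0$ in this case (and $a>0$). This step is routine but the algebra must be carried out carefully, and it is the one place where the general theorem says something beyond a mechanical restatement of \cref{t:roots}.
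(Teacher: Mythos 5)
Your proposal is correct and follows exactly the route the paper takes: the paper gives no separate proof of \cref{t:genroots} but derives it, as you do, from \cref{t:roots} via the substitution $x=z+x_0$ with $ag(z)=f(z+x_0)$, preservation of multiplicities under the linear change of variables, and the elementary second-derivative argument for concavity/convexity. Your explicit simplification of $x_0+3q/p$ and $x_0-3q/(2p)$ to the stated rational expressions is the only computation the paper leaves implicit, and your results agree with the formulas in \cref{t:genroots1b}.
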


In turn, \cref{c:roots} turns into 

\begin{corollary}
\label{c:genroots}
Recall \cref{e:defx0} and \cref{e:defpq}, and
set 
\begin{equation}
\Delta := (p/3)^3+(q/2)^2 = \frac{(3ac-b^2)^3}{(9a^2)^3}
+ \frac{(27a^2d+2b^3-9abc)^2}{(54a^3)^2}
\end{equation}
Then exactly one of the following holds:
\begin{enumerate}
\item 
\label{c:genroots1}
\fbox{$b^2=3ac$ or $\Delta>0$}\,: 
Then $f$ has exactly one real root and it is given by
\begin{equation}
x_0+ \sqrt[\mathlarger 3]{\frac{-q}{2}+ \sqrt{\Delta}}
+
\sqrt[\mathlarger 3]{\frac{-q}{2}-\sqrt{\Delta}}. 
\end{equation}
\item 
\label{c:genroots2}
\fbox{$b^2>3ac$ and $\Delta=0$}\,: 
Then $f$ has exactly two real roots which are given by
\begin{equation}
x_0+\frac{3q}{p}= x_0+2\sqrt[\mathlarger 3]{\frac{-q}{2}}
\;\;\text{and}\;\;
x_0+\frac{-3q}{2p}= x_0-\sqrt[\mathlarger 3]{\frac{-q}{2}}. 
\end{equation}
\item 
\label{c:genroots3}
\fbox{$\Delta<0$}\,:
Then $f$ has exactly three real (simple) roots  $r_0,r_1,r_2$, where
\begin{equation}
r_k := x_0+2(-p/3)^{1/2}\cos\Big(\frac{\theta+2k\pi}{3} \Big), 
\quad\text{}\;\;
\theta := \arccos \frac{-q/2}{(-p/3)^{3/2}},
\end{equation}
and $r_1<r_2<r_0$. 
\end{enumerate}
\end{corollary}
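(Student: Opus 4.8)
The plan is to obtain \cref{c:genroots} as a direct translation of \cref{c:roots} under the change of variables already set up in \cref{e:charvar}. Recall that the substitution $x = z + x_0$, with $x_0 = -b/(3a)$ as in \cref{e:defx0}, yields $f(z + x_0) = a\,g(z)$, where $g(z) = z^3 + pz + q$ is the depressed cubic with $p,q$ as in \cref{e:defpq}. Since $a > 0$, multiplication by $a$ changes neither the roots nor their multiplicities, and the substitution is affine; hence $x$ is a root of $f$ of a given multiplicity if and only if $z = x - x_0$ is a root of $g$ of the same multiplicity. Consequently the real roots of $f$ are exactly those of $g$, each shifted by $x_0$, and the count of real roots is identical for $f$ and $g$.

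Next I would translate the case distinctions. From $p = (3ac - b^2)/(3a^2)$ and $a > 0$, the sign of $p$ equals the sign of $3ac - b^2$; thus $p = 0 \Leftrightarrow b^2 = 3ac$, $p < 0 \Leftrightarrow b^2 > 3ac$, and $p > 0 \Leftrightarrow b^2 < 3ac$, which supplies the equivalences in the three boxed headers. Moreover, $\Delta = (p/3)^3 + (q/2)^2$ is computed from $p,q$ exactly as in \cref{c:roots}, so the quantity $\Delta$ appearing here is literally the $\Delta$ of the depressed cubic; its alternative closed form in terms of $a,b,c,d$ is obtained by substituting \cref{e:defpq}, a routine algebraic simplification. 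Therefore the trichotomy of \cref{c:roots} matches the three cases of \cref{c:genroots} verbatim once the sign conditions on $p$ are rewritten in terms of $b^2$ and $3ac$.

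With this in hand, I would simply invoke \cref{c:roots} in each case and add $x_0$ to every root. In \cref{c:genroots1}, the unique real root $\sqrt[3]{-q/2 + \sqrt\Delta} + \sqrt[3]{-q/2 - \sqrt\Delta}$ of $g$ becomes $x_0$ plus that same expression. In \cref{c:genroots2}, the two roots $3q/p = 2\sqrt[3]{-q/2}$ and $-3q/(2p) = -\sqrt[3]{-q/2}$ of $g$ become the stated roots after adding $x_0$; the identities $3q/p = 2\sqrt[3]{-q/2}$ and $-3q/(2p) = -\sqrt[3]{-q/2}$ need no re-derivation, having already been established in the proof of \cref{t:roots1b}. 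In \cref{c:genroots3}, the three roots $z_k = 2(-p/3)^{1/2}\cos((\theta + 2k\pi)/3)$ of $g$, with $\theta$ as given, become $r_k = x_0 + z_k$, and the ordering $z_1 < z_2 < z_0$ from \cref{c:roots} is preserved under the shift, giving $r_1 < r_2 < r_0$.

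I do not expect a genuine obstacle here: the content is entirely carried by \cref{c:roots}, and the remaining work is the bookkeeping of (i) confirming the invariance of $\Delta$ together with the sign translation for $p$, and (ii) adding the constant $x_0$ to each root while noting that an affine change of variables preserves both multiplicities and order. The only mildly error-prone point is keeping the case labels aligned when the depressed-cubic condition ``$p = 0$'' is rephrased as ``$b^2 = 3ac$,'' and similarly for the strict inequalities.
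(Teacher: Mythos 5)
Your proposal is correct and follows essentially the same route as the paper: the paper gives no separate proof of this corollary, deriving it (as you do) by the substitution $x = z + x_0$ with $ag(z) = f(z+x_0)$, noting that the linear change of variables preserves roots, multiplicities, and order, and then reading off the three cases of the depressed-cubic trichotomy with $p=0$, $p<0$, $p>0$ rewritten as $b^2 = 3ac$, $b^2 > 3ac$, $b^2 < 3ac$. Nothing is missing.
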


\section{Convex Analysis of the general quartic}

\label{sec:convquar}

In this section, we study the function
\begin{empheq}[box=\mybluebox]{equation}
\label{e:genquartic}
h(x) := \alpha x^4 + \beta x^3+ \gamma x^2+ \delta x + \varepsilon, \quad
\text{where $\alpha,\beta,\gamma,\delta,\varepsilon$ are in $\RR$ with $\alpha\neq 0$.}
\end{empheq}
We start by characterizing convexity.

\begin{proposition} {\bf (convexity)}
\label{p:convexquartic}
The general quartic \cref{e:genquartic} is convex
if and only if
\begin{equation}
\label{e:221214a}
\alpha>0 \;\;\text{and}\;\; 8\alpha\gamma\geq 3\beta^2. 
\end{equation}
\end{proposition}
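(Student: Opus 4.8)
The plan is to reduce convexity of $h$ to a statement about the sign of a quadratic, using the standard characterization that a twice continuously differentiable function on $\RR$ is convex if and only if its second derivative is nonnegative throughout $\RR$. Since $h$ is a polynomial, hence $C^\infty$, this criterion applies directly, and it is the only nontrivial fact the argument relies on. Accordingly, I would first compute
\begin{equation}
h'(x) = 4\alpha x^3 + 3\beta x^2 + 2\gamma x + \delta
\quad\text{and}\quad
h''(x) = 12\alpha x^2 + 6\beta x + 2\gamma = 2\big(6\alpha x^2 + 3\beta x + \gamma\big),
\end{equation}
so that $h$ is convex if and only if the quadratic $Q(x) := 6\alpha x^2 + 3\beta x + \gamma$ satisfies $Q(x) \geq 0$ for every $x\in\RR$.

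Next I would invoke the elementary fact that a quadratic with nonzero leading coefficient is nonnegative on all of $\RR$ exactly when its leading coefficient is positive and its discriminant is nonpositive. Here the hypothesis $\alpha\neq 0$ guarantees that $Q$ is genuinely quadratic with leading coefficient $6\alpha\neq 0$, so there is no degenerate linear case to treat separately. Positivity of the leading coefficient is precisely $\alpha>0$, while the discriminant is $(3\beta)^2 - 4(6\alpha)\gamma = 9\beta^2 - 24\alpha\gamma$; requiring this to be $\leq 0$ gives $24\alpha\gamma\geq 9\beta^2$, that is, $8\alpha\gamma\geq 3\beta^2$. Combining these two conditions reproduces \cref{e:221214a} and establishes both directions of the equivalence simultaneously.

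To keep the \emph{only if} direction airtight I would also spell out the contrapositive: if $\alpha<0$ then $Q(x)\to\minf$ as $|x|\to\pinf$, so $h''$ takes negative values and $h$ cannot be convex; and if $\alpha>0$ but $8\alpha\gamma<3\beta^2$ then the discriminant is strictly positive, $Q$ has two distinct real roots, and $Q$ (hence $h''$) is strictly negative strictly between them, again precluding convexity. I do not anticipate a genuine obstacle, as the whole argument is a discriminant computation; the only place demanding a little care is correctly characterizing nonnegativity of the quadratic — in particular using $\alpha\neq 0$ to rule out the degenerate case and confirming that the boundary situation $8\alpha\gamma = 3\beta^2$ (discriminant zero, a single double root, $Q\geq 0$) is properly counted as convex.
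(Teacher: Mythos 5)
Your argument is correct and follows essentially the same route as the paper: both reduce convexity to $h''\geq 0$ and then characterize nonnegativity of the resulting quadratic, the only cosmetic difference being that you use the discriminant criterion where the paper completes the square. The computations agree and the boundary case is handled properly.
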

\begin{proof}
Note that 
$h'(x)=4\alpha x^3 + 3\beta x^2 + 2\gamma x + \delta$ and, also completing the square, 
\begin{align}
h''(x)&=12 \alpha x^2 + 6\beta x + 2\gamma
= \frac{3}{4}\alpha\Big(4x+\frac{\beta}{\alpha}\Big)^2 + 2\gamma - \frac{3\beta^2}{4\alpha}. 
\end{align}
Hence $h''\geq 0$
$\Leftrightarrow$
[$\alpha>0$ and $2\gamma \geq 3\beta^2/(4\alpha)$]
$\Leftrightarrow$  \cref{e:221214a}. 
(For further information on deciding nonnegativity of polynomials, 
see \cite[Section~3.1.3]{BPT}.)
\end{proof}

Having characterization convexity, we shall assume this condition for the remainder of this section:
\begin{empheq}[box=\mybluebox]{equation}
\label{e:convexquartic}
\text{$h$ is convex, i.e.,\;\;}
\alpha >0 \;\text{and}\; 8\alpha\gamma\geq 3\beta^2.
\end{empheq}

\begin{proposition} {\bf (Fenchel conjugate)}
\label{p:Fenchelquartic}
Recall our assumptions \cref{e:genquartic} and \cref{e:convexquartic}. 
Let $y\in \RR$.
Then 
\begin{equation}
h^*(y) = yx_y-h(x_y), 
\end{equation}
where 
$p:=(8\alpha\gamma-3\beta^2)/(16\alpha^2)\geq 0$,
$q:=(8\alpha^2(\delta-y)+\beta^3-4\alpha\beta\gamma)/(32\alpha^3)$,
$\Delta := (p/3)^2+(q/2)^2\geq 0$, and 
\begin{equation}
\label{e:theo1}
x_y := 
-\frac{\beta}{4\alpha}+
\sqrt[\mathlarger 3]{\frac{-q}{2}+\sqrt{\Delta}}
+
\sqrt[\mathlarger 3]{\frac{-q}{2}-\sqrt{\Delta}}. 
\end{equation}
\end{proposition}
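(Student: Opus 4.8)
The plan is to compute the Fenchel conjugate via its defining variational problem and exploit the convexity established in \cref{e:convexquartic}. Recall that $h^*(y) = \sup_{x\in\RR}\big(yx - h(x)\big)$. Since $h$ is convex and (being a quartic with positive leading coefficient) coercive, the function $x\mapsto yx - h(x)$ is concave and coercive from above, so the supremum is attained at a unique point $x_y$ characterized by the first-order condition $h'(x_y) = y$. Once I exhibit this maximizer, the value is immediate: $h^*(y) = yx_y - h(x_y)$, which is exactly the asserted form. So the entire content reduces to \textbf{solving $h'(x)=y$ and showing it has a unique real solution given by \cref{e:theo1}}.

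**Next I would** reduce the stationarity equation to a depressed cubic and apply \cref{c:roots}. Since $h'(x) = 4\alpha x^3 + 3\beta x^2 + 2\gamma x + \delta$, the equation $h'(x) = y$ is the general cubic $4\alpha x^3 + 3\beta x^2 + 2\gamma x + (\delta - y) = 0$, i.e.\ a cubic of the form \cref{e:gencubic} with $(a,b,c,d) = (4\alpha,\,3\beta,\,2\gamma,\,\delta-y)$ and $a = 4\alpha > 0$. I would then invoke the change of variables \cref{e:charvar}, which gives $x_0 = -b/(3a) = -3\beta/(12\alpha) = -\beta/(4\alpha)$, matching the leading term in \cref{e:theo1}. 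Substituting into \cref{e:defpq} yields the depressed cubic parameters; a routine simplification should confirm $p = (3ac - b^2)/(3a^2) = (8\alpha\gamma - 3\beta^2)/(16\alpha^2)$ and $q = (27a^2 d + 2b^3 - 9abc)/(27a^3) = (8\alpha^2(\delta - y) + \beta^3 - 4\alpha\beta\gamma)/(32\alpha^3)$, exactly the stated values.

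**The key structural step** is to verify that we are in the single-real-root case of the trichotomy, namely case~(i) of \cref{c:roots}. Here convexity does the work: the hypothesis $8\alpha\gamma \geq 3\beta^2$ forces $p = (8\alpha\gamma - 3\beta^2)/(16\alpha^2) \geq 0$. If $p = 0$ we land in the ``$p=0$'' branch of \cref{c:roots}(i); if $p > 0$ then $\Delta = (p/3)^3 + (q/2)^2 > 0$, again landing in branch (i). Either way \cref{c:roots}(i) applies and delivers the unique real root
\begin{equation*}
x_y = x_0 + \sqrt[\mathlarger 3]{\frac{-q}{2} + \sqrt{\Delta}} + \sqrt[\mathlarger 3]{\frac{-q}{2} - \sqrt{\Delta}},
\end{equation*}
which with $x_0 = -\beta/(4\alpha)$ is precisely \cref{e:theo1}. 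The uniqueness of the real root dovetails with the uniqueness of the maximizer guaranteed by strict convexity of $h$ (strict because $p\geq 0$ already gives $\Delta\geq 0$, and $h''$ vanishes at most at the inflection point), so the stationary point is genuinely the global maximizer.

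**I expect the only real obstacle to be bookkeeping**: the algebraic simplification of $p$ and $q$ from the raw formulas in \cref{e:defpq} into the clean forms stated in the proposition, and a careful note that the exponent $(p/3)^2$ appearing in the statement's definition of $\Delta$ should read $(p/3)^3$ to agree with \cref{c:roots} (the formula \cref{e:theo1} itself uses the correct cube-root structure). Beyond this, one should remark that when $p = 0$ the expression $\Delta = (q/2)^2$ and the two cube roots collapse appropriately, so \cref{e:theo1} remains valid and continuous across the $p=0$ boundary. No genuinely hard analytic step arises; the proposition is essentially a corollary of \cref{c:roots} applied to $h' - y$, packaged through the Fermat stationarity condition for the concave maximization defining $h^*$.
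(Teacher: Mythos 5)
Your proposal is correct and follows essentially the same route as the paper: reduce $h'(x)=y$ to the cubic with $(a,b,c,d)=(4\alpha,3\beta,2\gamma,\delta-y)$, use convexity to get $p\geq 0$ and hence $\Delta\geq 0$, and invoke the single-real-root case of the trichotomy (the paper justifies solvability of $h'(x)=y$ via supercoercivity and $\ran h'=\dom\partial h^*\supseteq\intdom h^*=\RR$ rather than direct attainment of the supremum, but this is an immaterial variation). You are also right that the exponent in the statement's $\Delta := (p/3)^2+(q/2)^2$ is a typo for $(p/3)^3$, which the paper's own proof silently corrects.
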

\begin{proof}
Because $h$ is supercoercive, it follows from 
\cite[Proposition~14.15]{BC2017} that $\dom h^*=\RR$.
Combining with the differentiability of $h$, it follows that
$y\in \intdom h^*\subseteq \dom \partial h^* = \ran \partial h = \ran h'$. 
However, if $h'(x)=y$, then $h^*(y)=xy-h(x)$ and we have found the conjugate.
It remains to solve $h'(x)=y$, i.e., 
\begin{equation}
\label{e:221220a}
4\alpha x^3+3\beta x^2+2\gamma x+\delta-y=0.
\end{equation}
So we set 
\begin{equation*}
f(x) := ax^3+bx^2+cx+d,
\quad\text{where}\;\;
a := 4\alpha,
\;\;
b := 3\beta,
\;\;
c := 2\gamma,
\;\;
d := \delta-y. 
\end{equation*}
To solve \cref{e:221220a}, i.e., $f(x)=0$, we first note that
\begin{align*}
p 
&= 
\frac{3ac-b^2}{3a^2}
= \frac{3(4\alpha)(2\gamma)-(3\beta)^2}{3(4\alpha)^2}
=
\frac{8\alpha\gamma-3\beta^2}{16\alpha^2}
\geq 0,
\end{align*}
where the inequality follows from \cref{e:convexquartic}.
Next, 
\begin{align*}
q 
&=
\frac{3^3a^2d+2b^3-3^2abc}{(3a)^3}
= \frac{3^34^2\alpha^2(\delta-y)+2(3^3\beta^3)-3^2(4\alpha)(3\beta)(2\gamma)}{3^34^3\alpha^3}\\
&=
\frac{8\alpha^2(\delta-y)+\beta^3-4\alpha\beta\gamma}{32\alpha^3} 
\end{align*}
and 
\begin{equation*}
\Delta = (p/3)^3 + (q/2)^2\geq 0,
\end{equation*}
where the inequality follows because $p\geq 0$. 
Then $-b/(3a)=-\beta/(4\alpha)$ and now 
\cref{c:genroots}\cref{c:genroots1} yields the unique solution of $f(x)=0$ as 
\cref{e:theo1}. 
\end{proof}

\begin{proposition} {\bf (proximal mapping)}
\label{p:proxquartic}
Recall our assumptions \cref{e:genquartic} and \cref{e:convexquartic}. 
Let $y\in \RR$.
Then 
\begin{equation}
\label{e:proxquartic}
\prox_h(y) = -\frac{\beta}{4\alpha}+\sqrt[\mathlarger 3]{\frac{-q}{2}+ \sqrt{\Delta}}
+
\sqrt[\mathlarger 3]{\frac{-q}{2}-\sqrt{\Delta}}, 
\end{equation}
where 
\begin{align}
\label{e:proxquarticpq}
p := \frac{4\alpha(1+2\gamma)-3\beta^2}{16\alpha^2},
\quad 
q := \frac{8\alpha^2(\delta-y)+\beta^3-2\alpha\beta(1+2\gamma)}{32\alpha^3},
\end{align}
and 
$\Delta := (p/3)^3+(q/2)^2\geq 0$.
\end{proposition}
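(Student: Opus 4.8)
The plan is to follow essentially the same route as the proof of \cref{p:Fenchelquartic}, exploiting that the proximal point is again pinned down by a cubic first-order optimality condition. Recall that $\prox_h(y)$ is by definition the unique minimizer of the objective $\phi(x):=h(x)+\tfrac{1}{2}(x-y)^2$. Since $h$ is convex by \cref{e:convexquartic} and $\tfrac{1}{2}(x-y)^2$ is strictly convex, $\phi$ is strictly convex; moreover the leading term $\alpha x^4$ with $\alpha>0$ makes $\phi$ supercoercive, so $\phi$ has a unique minimizer, characterized by the first-order condition $\phi'(x)=0$, i.e.\ $h'(x)+(x-y)=0$.

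Next I would write out this optimality condition explicitly. Since $h'(x)=4\alpha x^3+3\beta x^2+2\gamma x+\delta$, the equation $\phi'(x)=0$ becomes
\begin{equation*}
4\alpha x^3+3\beta x^2+(2\gamma+1)x+(\delta-y)=0,
\end{equation*}
which is a cubic $f(x)=ax^3+bx^2+cx+d=0$ with $a:=4\alpha$, $b:=3\beta$, $c:=2\gamma+1$, and $d:=\delta-y$. The only change relative to the conjugate computation is that the coefficient $c$ has picked up the extra $+1$ coming from differentiating the quadratic regularizer $\tfrac{1}{2}(x-y)^2$.

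I would then feed these coefficients into the machinery of \cref{sec:gencubic}. Substituting $a,b,c,d$ into the formulas \cref{e:defpq} for $p$ and $q$ and simplifying gives exactly the values stated in \cref{e:proxquarticpq}; in particular $x_0=-b/(3a)=-\beta/(4\alpha)$. The one structural point worth isolating is the sign of $p$: from $p=(4\alpha(1+2\gamma)-3\beta^2)/(16\alpha^2)=(4\alpha+(8\alpha\gamma-3\beta^2))/(16\alpha^2)$ together with the convexity hypothesis $8\alpha\gamma\geq 3\beta^2$ and $\alpha>0$, I obtain $p>0$, whence $\Delta=(p/3)^3+(q/2)^2>0$. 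Thus we land in case \cref{c:genroots1} of \cref{c:genroots}, which asserts that $f$ has exactly one real root, namely $x_0+\sqrt[3]{-q/2+\sqrt{\Delta}}+\sqrt[3]{-q/2-\sqrt{\Delta}}$. Because $\phi$ is strictly convex, this single real critical point is the global minimizer, so it equals $\prox_h(y)$ and coincides with \cref{e:proxquartic}.

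The argument is entirely routine, and the only mild obstacle is the algebraic bookkeeping in simplifying $p$ and $q$ down to the stated closed forms — but this is identical in spirit to the conjugate proof, and can be streamlined by merely recording the single substitution $c=2\gamma\mapsto 2\gamma+1$ relative to \cref{p:Fenchelquartic}.
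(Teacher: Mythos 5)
Your proposal is correct and follows essentially the same route as the paper: reduce to the optimality condition $h'(x)+x-y=0$, observe that the resulting cubic differs from the conjugate computation only in $c=2\gamma+1$, verify $p\geq 1/(4\alpha)>0$ from the convexity hypothesis so that $\Delta>0$, and invoke \cref{c:genroots}\cref{c:genroots1}. The only cosmetic difference is that you justify uniqueness via strict convexity and supercoercivity of $h+\tfrac12(\cdot-y)^2$, whereas the paper cites differentiability and full domain of $h$ directly; both are fine.
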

\begin{proof}
Because $h$ is differentiable and full domain,
it follows that $\prox_h(y)$ is the \emph{unique} solution $x$
of the equation $h'(x)+x-y=0$. 
The proof thus proceeds analogously to that of 
\cref{p:Fenchelquartic} --- the only difference is we must solve
\begin{equation*}
f(x) := ax^3+bx^2+cx+d,
\quad\text{where}\;\;
a := 4\alpha,
\;\;
b := 3\beta,
\;\;
c := 2\gamma+1,
\;\;
d := \delta-y. 
\end{equation*}
(The only difference is that $c=2\gamma+1$ rather than $2\gamma$ due to the additional term ``$+x$''.)
Thus we know \emph{a priori} that the resulting cubic must have a unique real solution. 
We now have
\begin{align*}
\label{e:221214b}
0 &< \frac{1}{4\alpha} \leq 
\frac{1}{4\alpha}+\frac{8\alpha\gamma-3\beta^2}{16\alpha^2}
= \frac{4\alpha(1+2\gamma)-3\beta^2}{16\alpha^2}=p 
= \frac{12\alpha(1+2\gamma)-9\beta^2}{48\alpha^2}\\
&= \frac{3ac-b^2}{3a^2},
\end{align*}
which is our usual $p$ from discussing roots of the cubic $f$.
Similarly, the $q$ defined here is the same as the usual $q$ for $f(x)$ (see \cref{e:defpq}). 
Finally, the formula for $x=\prox_h(y)$ now follows from 
\cref{c:genroots}\cref{c:genroots1}. 
\end{proof}

\begin{example}
\label{ex:geoquart}
Suppose that 
\begin{equation}
h(x)=x^4+x^3+x^2+x+1,
\end{equation}
and let $y\in\RR$. 
Then $h$ is convex and
\begin{subequations}
\begin{align}
\label{e:221220b}
h^*(y)&=yx_y-h(x_y),\quad\text{where}\\
x_y &= 
-\frac{1}{4}+
\frac{1}{2}\sqrt[\mathlarger 3]{y-\tfrac{5}{8}+\sqrt{(y-\tfrac{5}{8})^2+(\tfrac{5}{4})^3}}
+
\frac{1}{2}\sqrt[\mathlarger 3]{y-\tfrac{5}{8}-\sqrt{(y-\tfrac{5}{8})^2+(\tfrac{5}{4})^3}}.
\end{align}
\end{subequations}
Moreover, 
\begin{equation}
\label{e:221215b}
\prox_h(y) = -\frac{1}{4}+
\frac{1}{2}\sqrt[\mathlarger 3]{y-\tfrac{3}{8}+{\sqrt{(y-\tfrac{3}{8})^2+(\tfrac{3}{4})^3}}}
+\frac{1}{2}\sqrt[\mathlarger 3]{y-\tfrac{3}{8}-{\sqrt{(y-\tfrac{3}{8})^2+(\tfrac{3}{4})^3}}}. 
\end{equation}
See \cref{fig:figure3} for a visualization. 
\end{example}
\begin{proof}
Note that $h$ fits the pattern of \cref{e:genquartic} with 
$\alpha=\beta=\gamma=\delta=\varepsilon=1$.
The characterization of convexity presented in
\cref{p:convexquartic} turns into $1>0$ and $8\geq 3$ which are both obviously true. Hence $h$ is convex.

To compute the Fenchel conjugate, we apply \cref{p:Fenchelquartic} and get 
$p=5/16$, $q=(5-8y)/32=-(y-5/8)/4$, and
$\Delta = 5^3/16^3 + (y-5/8)^2/8^2$. Then $-q/2=(y-5/8)/8$.
Hence 
\cref{e:theo1} turns into
\begin{align*}
x_y &=
-\frac{1}{4}+
\sqrt[\mathlarger 3]{\frac{y-5/8}{8}+\sqrt{\frac{(y-5/8)^2}{8^2}+\frac{5^3}{16^3}}}
+
\sqrt[\mathlarger 3]{\frac{y-5/8}{8}-\sqrt{\frac{(y-5/8)^2}{8^2}+\frac{5^3}{16^3}}}\\
&=
-\frac{1}{4}+
\sqrt[\mathlarger 3]{\frac{y-5/8}{8}+\sqrt{\frac{(y-5/8)^2}{8^2}+\frac{5^3}{8^2\cdot 4^3}}}
+
\sqrt[\mathlarger 3]{\frac{y-5/8}{8}-\sqrt{\frac{(y-5/8)^2}{8^2}+\frac{5^3}{8^2\cdot 4^3}}},
\end{align*}
which simplifies to \cref{e:221220b}.

To compute $\prox_h(y)$, we utilize 
\cref{p:proxquartic}. Obtaining fresh values for $p,q,\Delta$, we have this time $p=9/16$, $q=(3-8y)/32$, 
$\Delta = ((8y-3)^2+27)/4096 = ((8y-3)^2+3^3)/64^2$.
Hence 
$-q/2 = (8y-3)/64$ and 
$\sqrt{\Delta}=\sqrt{(8y-3)^2+3^3}/64$.
It follows that 
\begin{align*}
\sqrt[\mathlarger 3]{\frac{-q}{2}\pm\sqrt{\Delta}}
&= 
\sqrt[\mathlarger 3]{\frac{8y-3}{64}\pm\frac{\sqrt{(8y-3)^2+3^3}}{64}}
= \frac{1}{4}\sqrt[\mathlarger 3]{8y-3\pm{\sqrt{(8y-3)^2+3^3}}}\\
&= \frac{1}{2}\sqrt[\mathlarger 3]{y-\tfrac{3}{8}\pm{\sqrt{(y-\tfrac{3}{8})^2+(\tfrac{3}{4})^3}}}
\end{align*}
This, $-\beta/(4\alpha)=-1/4$, and \cref{e:proxquartic}
now yields \cref{e:221215b}. 
\end{proof}

\begin{figure}[htp]

\centering
\includegraphics[width=.31\textwidth]{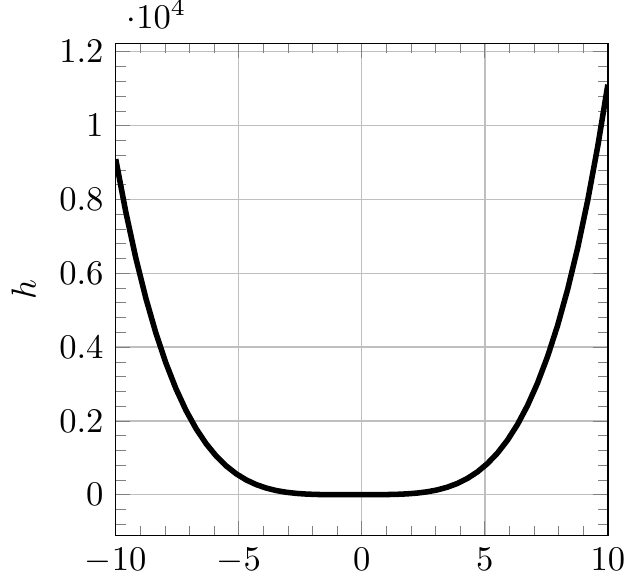}\hfill
\includegraphics[width=.32\textwidth]{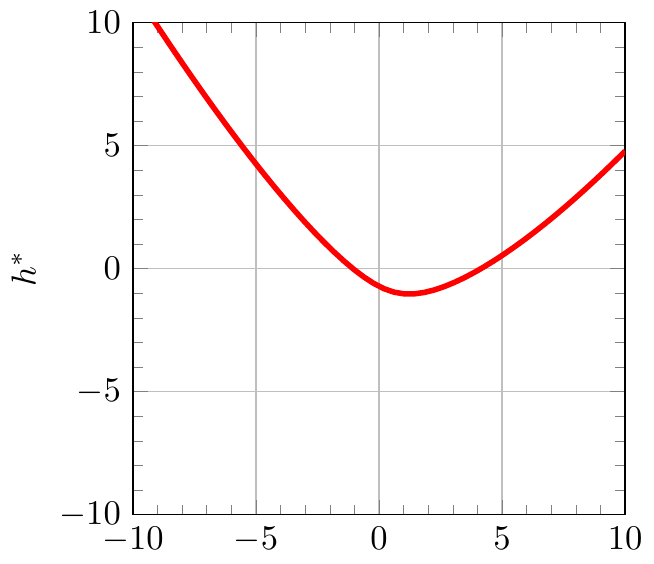}\hfill
\includegraphics[width=.325\textwidth]{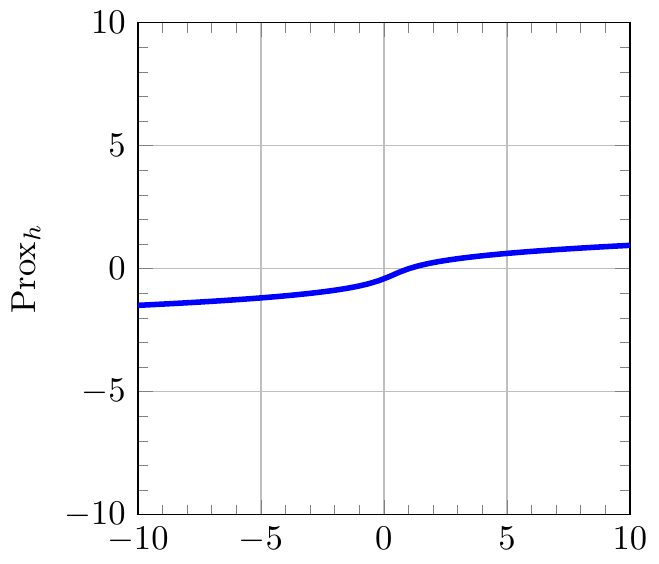}

\caption{A visualization of \cref{ex:geoquart}. 
Depicted are $h$ (left), its conjugate $h^*$ (middle), and the proximal mapping $\prox_h$ (right).}
\label{fig:figure3}
\end{figure}

\begin{example}
Suppose that 
\begin{equation}
h(x) = \alpha x^4, \quad\text{where $\alpha>0$,}
\end{equation}
and let $y\in\RR$. 
Then 
\begin{equation}
\label{e:alpha4*}
h^*(y)=\frac{3}{4(4\alpha)^{1/3}}y^{4/3}
\end{equation}
and 
\begin{equation}
\label{e:221215a}
\prox_h(y)=\frac{1}{2}\sqrt[\mathlarger 3]{\frac{y}{\alpha}+ \sqrt{\frac{1+27\alpha y^2}{27\alpha^3}}}
+
\frac{1}{2}\sqrt[\mathlarger 3]{\frac{y}{\alpha}- \sqrt{\frac{1+27\alpha y^2}{27\alpha^3}}}. 
\end{equation}
\end{example}
\begin{proof}
Note that $h$ fits the pattern of \cref{e:genquartic} with 
$\beta=\gamma=\delta=\varepsilon=0$.
The characterization of convexity presented in
\cref{p:convexquartic} turns into $\alpha>0$ and $0\geq 0$ which are both obviously true. Hence $h$ is convex.

We start by computing the Fenchel conjugate of $h$ using \cref{p:Fenchelquartic}.
We have $p=0$, $q=-y/(4\alpha)$, $\Delta = (y/(8\alpha))^2$, 
and $-\beta/(4\alpha)=0$. 
Hence $-q/2=y/(8\alpha)$ and $\sqrt{\Delta}=|y|/(8\alpha)$ which imply
$\sqrt[3]{(-q/2)\pm\sqrt{\Delta}} =\sqrt[3]{y/(8\alpha)\pm |y|/(8\alpha) }
= \sqrt[3]{\max\{0,y/(4\alpha)\}}$ or 
$\sqrt[3]{\min\{0,y/(4\alpha)\}}$. 
Using \cref{e:theo1}, we get
\begin{align*}
x_y &= 
\sqrt[\mathlarger 3]{\max\{0,y/(4\alpha)\}}
+\sqrt[\mathlarger 3]{\min\{0,y/(4\alpha)\}}
= \sqrt[3]{y/(4\alpha)}. 
\end{align*}
Using \cref{p:Fenchelquartic}, we obtain
\begin{align*}
h^*(y)
&=yx_y-h(x_y)
=yy^{1/3}/(4\alpha)^{1/3}
-\alpha y^{4/3}/(4\alpha)^{4/3}\\
&=
\frac{|y|^{4/3}}{4^{1/3}\alpha^{1/3}}-
\frac{|y|^{4/3}}{4^{4/3}\alpha^{1/3}}
=\frac{|y|^{4/3}}{4^{1/3}\alpha^{1/3}}\big(1-\tfrac{1}{4} \big)
=\frac{3|y|^{4/3}}{4(4\alpha)^{1/3}}
\end{align*}
as claimed. 

To compute $\prox_h(y)$, we utilize \cref{p:proxquartic}.
Obtain fresh values of $p,q,\Delta$, we have this time
$p=1/(4\alpha)>0$ and $q=-y/(4\alpha)$ (see \cref{e:proxquarticpq}).
Hence $\Delta = (p/3)^3+(q/2)^2 = (1+27\alpha y^2)/(1728\alpha^3)$ and so
$\sqrt{\Delta}=\sqrt{1+27\alpha y^2}/(8(3\alpha)^{3/2})$. 
Now $-\beta/(4\alpha)=0$ and $-q/2=y/(8\alpha)$, so \cref{e:proxquartic} yields 
\begin{align*}
\prox_h(y) &= 
\sqrt[\mathlarger 3]{\frac{y}{8\alpha}+ \frac{\sqrt{1+27\alpha y^2}}{8(3\alpha)^{3/2}}}
+
\sqrt[\mathlarger 3]{\frac{y}{8\alpha}- \frac{\sqrt{1+27\alpha y^2}}{8(3\alpha)^{3/2}}}\\
&= 
\frac{1}{2}\sqrt[\mathlarger 3]{\frac{y}{\alpha}+ \sqrt{\frac{1+27\alpha y^2}{27\alpha^3}}}
+
\frac{1}{2}\sqrt[\mathlarger 3]{\frac{y}{\alpha}- \sqrt{\frac{1+27\alpha y^2}{27\alpha^3}}}
\end{align*}
as claimed. 
\end{proof}

\begin{remark}
The Fenchel conjugate formula \cref{e:alpha4*} is known and can also be computed by 
combining, e.g., \cite[Example~13.2(i) with Proposition~13.23(i)]{BC2017}. 
The prox formula \cref{e:221215a} appears --- with a typo though --- in \cite[Example~24.38(v)]{BC2017}. 
Finally, for a Maple implementation for quartics, see \cite{Yves}. 
\end{remark}

\section{The proximal mapping of $\alpha/x$}

\label{sec:alpha/x}

In this section, we study the convex reciprocal function 
\begin{empheq}[box=\mybluebox]{equation}
\label{e:alpha/x}
h(x) := \begin{cases} \alpha/x, &\text{if $x>0$;}\\
\pinf, &\text{if $x\leq 0$,}
\end{cases}\qquad
\text{where $\alpha> 0$.}
\end{empheq}
The Fenchel conjugate $h^*$, which requires only solving a \emph{quadratic} equation, 
is essentially known (e.g., combine \cite[Example~13.2(ii) and Proposition~13.23(i)]{BC2017}), and given by 
\begin{equation}
h^*(y) = \begin{cases}
-2\sqrt{-\alpha y}, &\text{if $y\leq 0$;}\\
\pinf, &\text{if $y>0$.}
\end{cases}
\end{equation}

The purpose of this section is to explicitly compute $\prox_h$.
We have the following result:

\begin{proposition}
\label{p:alpha/x}
Suppose that $h$ is given by \cref{e:alpha/x},
and let $y\in\RR$. 
Set $y_0 := -3\sqrt[3]{\alpha/4}\approx -1.88988 \sqrt[3]{\alpha}$. 
Then we have the following three possibilities:
\begin{enumerate}
\item If $y_0<y$, then 
\begin{align*}
\prox_h(y)
&= \frac{y}{3} + 
\sqrt[\mathlarger 3]{\frac{\alpha}{2}+\Big(\frac{y}{3}\Big)^3 +\sqrt{\alpha \Big( \frac{\alpha}{4}+\Big(\frac{y}{3}\Big)^3 \Big)}}
+\sqrt[\mathlarger 3]{\frac{\alpha}{2}+\Big(\frac{y}{3}\Big)^3 -\sqrt{\alpha \Big( \frac{\alpha}{4}+\Big(\frac{y}{3}\Big)^3 \Big)}}. 
\end{align*}
\item If $y=y_0$, then 
\begin{equation*}
\prox_h(y_0) = \sqrt[3]{\alpha}/\sqrt[3]{4}\approx 0.62996\sqrt[3]{\alpha}.
\end{equation*}
\item If $y<y_0$, then 
\begin{equation*}
\prox_h(y)= 
\frac{y}{3}\bigg(1- 2\cos\Big(\frac{1}{3}\arccos\frac{(y/3)^3+\alpha/2}{-(y/3)^{3}}\Big)\bigg). 
\end{equation*}
\end{enumerate}
\end{proposition}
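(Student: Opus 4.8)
The plan is to obtain $\prox_h(y)$ from its variational characterization and then read off the answer from the trichotomy of \cref{c:genroots}. Since $h$ is proper, lower semicontinuous, and convex (indeed $h''(x)=2\alpha/x^3>0$ on $\RPP$), the point $\prox_h(y)$ is the unique minimizer over $x>0$ of $\phi(x):=\alpha/x+\tfrac12(x-y)^2$. As $x\to 0^+$ and as $x\to+\infty$ we have $\phi(x)\to+\infty$, and $\phi$ is strictly convex on $\RPP$; hence the minimizer exists, is unique, and is the unique critical point of $\phi$ in $\RPP$. Setting $\phi'(x)=-\alpha/x^2+(x-y)=0$ and multiplying by $x^2>0$, I obtain the defining cubic
\begin{equation*}
f(x):=x^3-yx^2-\alpha=0,
\end{equation*}
so that $\prox_h(y)$ is precisely the unique \emph{positive} root of $f$.

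Next I would feed $f$ into the machinery of \cref{sec:gencubic}, with $a=1$, $b=-y$, $c=0$, $d=-\alpha$. A short computation gives $x_0=y/3$, $p=-y^2/3$, $q=-\alpha-2y^3/27$, whence $-q/2=\tfrac\alpha2+(y/3)^3$ and, after cancellation of the $(y/3)^6$ terms, $\Delta=(p/3)^3+(q/2)^2=\alpha\big(\tfrac\alpha4+(y/3)^3\big)$. Since $\alpha>0$, the sign of $\Delta$ is governed entirely by $\tfrac\alpha4+(y/3)^3$, and one checks that $\Delta>0$, $\Delta=0$, $\Delta<0$ correspond exactly to $y>y_0$, $y=y_0$, $y<y_0$, where $y_0=-3\sqrt[3]{\alpha/4}$. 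This matches the three branches of the proposition with the three cases of \cref{c:genroots}.

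The remaining task is to select, in each branch, the positive root of $f$ and substitute the data above into the corresponding formula of \cref{c:genroots}. For $y>y_0$, \cref{c:genroots}\cref{c:genroots1} gives a single real root; since $f(0)=-\alpha<0$ while $f(x)\to+\infty$ as $x\to+\infty$, that root is automatically positive, and substitution yields the stated formula. For $y=y_0$ one has $(y/3)^3=-\alpha/4$, so \cref{c:genroots}\cref{c:genroots2} produces the simple root $x_0+2\sqrt[3]{-q/2}=\sqrt[3]{\alpha/4}$ and the double root $x_0-\sqrt[3]{-q/2}=-2\sqrt[3]{\alpha/4}$; only the former is positive, hence equals $\prox_h(y_0)$. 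For $y<y_0$ there are three real roots, and here the selection is the crux of the argument: I would resolve it with Vieta (\cref{f:Vieta}), noting that the product of the roots is $-d/a=\alpha>0$ while their sum is $-b/a=y<0$, which forces exactly one positive root, necessarily the largest. By the ordering $r_1<r_2<r_0$ in \cref{c:genroots}\cref{c:genroots3}, the positive root is $r_0$ (the $k=0$ value); substituting $2(-p/3)^{1/2}=-2y/3$ and $(-p/3)^{3/2}=-(y/3)^3$ (valid since $y<0$) converts $r_0$ into $\tfrac y3\big(1-2\cos(\tfrac13\arccos\tfrac{(y/3)^3+\alpha/2}{-(y/3)^3})\big)$, as claimed.

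I expect the only genuinely delicate point to be this root selection in the three-real-root case: confirming, via the signs of the Vieta product and sum, that exactly one root is positive and that it is the largest $r_0$, so that the $k=0$ branch of \cref{c:genroots}\cref{c:genroots3} is the correct one. The remaining manipulations --- identifying $p$, $q$, $\Delta$ and simplifying the radical expressions of \cref{c:genroots} into the displayed closed forms --- are routine algebra.
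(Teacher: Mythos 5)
Your proposal is correct and follows essentially the same route as the paper: reduce $\prox_h(y)$ to the unique positive root of $x^3-yx^2-\alpha=0$, compute $p$, $q$, $\Delta$ exactly as you do, and read off the answer from the trichotomy of \cref{c:genroots}. The only cosmetic differences are that the paper handles the $\Delta=0$ case by a continuity shortcut ($\prox_h(y_0)=\lim_{y\to y_0^+}\prox_h(y)$) rather than by evaluating both roots of \cref{c:genroots}\cref{c:genroots2}, and it merely asserts---rather than derives via Vieta as you do---that in the three-root case the positive root is the largest one $r_0$.
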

\begin{proof}
Because $\dom h = \RPP$, we must find the \emph{positive}
solution of the equation $h'(x)+x-y=0$.
Since $h'(x)=-\alpha x^{-2}$, we are looking for the (necessarily unique) positive solution
of $x^2(h'(x)+x-y)=0$, i.e., of 
\begin{equation*}
x^3-yx^2-\alpha=0.
\end{equation*}
This fits the pattern of \cref{e:gencubic} in \cref{sec:gencubic},
with parameters
$a=1$, $b=-y$, $c=0$, and $d=-\alpha$.
As in \cref{e:defpq}, we set 
We have
\begin{align}
\label{e:Ilop}
p &:=
\frac{3ac-b^2}{3a^2}
= -\frac{y^2}{3} 
\begin{cases}
<0, &\text{if $y\neq 0$;}\\
=0, &\text{if $y=0$}
\end{cases}
\end{align}
and 
\begin{align*}
q &:=
\frac{27a^2d+2b^3-9abc}{27a^3}
= -\alpha - 2(y/3)^3. 
\end{align*}
Next, 
\begin{align*}
\Delta 
&= (p/3)^3 + (q/2)^2
= -y^6/9^3 + (\alpha+2(y/3)^3)^2/4\\
&=
-(y/3)^6+\alpha^2/4+\alpha(y/3)^3+(y/3)^6\\
&= \alpha\big(\alpha/4 + (y/3)^3\big).
\end{align*}
Hence 
\begin{equation}
\Delta
\begin{cases}
< 0 \Leftrightarrow y < y_0; \\
= 0 \Leftrightarrow y = y_0; \\
> 0 \Leftrightarrow y>y_0,
\end{cases}
\quad \text{where}\;\; y_0 := -\frac{3}{\sqrt[3]{4}}\sqrt[3]{\alpha}\approx -1.88988 \sqrt[3]{\alpha}. 
\end{equation}
Now set 
\begin{equation}
\label{e:Ilofirst}
x_0 := -\frac{b}{3a} = \frac{y}{3}. 
\end{equation}
Note that 
\begin{equation}
\label{e:Ilo-q/2} 
-q/2=(y/3)^3+\alpha/2.
\end{equation}
We now discuss the three possibilities from \cref{c:genroots} --- these will correspond to the three items of the result!

\emph{Case~1}: $b^2=3ac$ or $\Delta>0$; equivalently, $y=0$ or $y>y_0$; equivalently, $y_0<y$.\\
Then \cref{c:genroots}\cref{c:genroots1} yields
\begin{align*}
\prox_h(y)
&= x_0 + \sqrt[\mathlarger 3]{\frac{-q}{2}+\sqrt{\Delta}} + \sqrt[\mathlarger 3]{\frac{-q}{2}-\sqrt{\Delta}}\\
&= \frac{y}{3} + 
\sqrt[\mathlarger 3]{\frac{\alpha}{2}+\Big(\frac{y}{3}\Big)^3 +\sqrt{\alpha \Big( \frac{\alpha}{4}+\Big(\frac{y}{3}\Big)^3 \Big)}}
+\sqrt[\mathlarger 3]{\frac{\alpha}{2}+\Big(\frac{y}{3}\Big)^3 -\sqrt{\alpha \Big( \frac{\alpha}{4}+\Big(\frac{y}{3}\Big)^3 \Big)}}
\end{align*}
as claimed.

\emph{Case~2}: $\Delta=0$; equivalently, $y=y_0$.\\
Then \cref{c:genroots}\cref{c:genroots2} yields two distinct real roots.
We can take a short cut here, though:
By exploiting the continuity of $\prox_h$ at $y_0$ via 
$\prox_h(y_0)=\lim_{y\to y_0^+}\prox_h(y)$, we get
\begin{equation*}
\prox_h(y_0) = 
\frac{y_0}{3}+2\sqrt[\mathlarger 3]{\frac{\alpha}{2}+\Big(\frac{y_0}{3}\Big)^3 }
= \sqrt[3]{\alpha}/\sqrt[3]{4}\approx 0.62996\sqrt[3]{\alpha}. 
\end{equation*}

\emph{Case~3}: $\Delta<0$; equivalently, $y<y_0$.\\
By uniqueness of $\prox_h(y)$, the desired root must be the \emph{largest} (and the only positive) real root offered in this case (see \cref{c:genroots}\cref{c:genroots3}): 
\begin{equation}
\prox_h(y)= 
x_0 + 2(-p/3)^{1/2}\cos\Big(\frac{\theta}{3}\Big), 
\quad\text{where}\;\;
\theta := \arccos\frac{-q/2}{(-p/3)^{3/2}}, 
\end{equation}
By \cref{e:Ilop},
$-p/3=y^2/9$; thus, using $y<y_0<0$, we have $(-p/3)^{1/2}=-y/3$, 
$(-p/3)^{3/2}=-(y/3)^3$, and \cref{e:Ilo-q/2} yields
$-(q/2)/(-p/3)^{3/2}=-((y/3)^3+\alpha/2)/(y/3)^3$. 
This and \cref{e:Ilofirst} results in 
\begin{equation}
\prox_h(y)= 
\frac{y}{3} - 2\frac{y}{3}\cos\Big(\frac{\theta}{3}\Big), 
\quad\text{where}\;\;
\theta := \arccos \bigg(-\frac{(y/3)^3+\alpha/2}{(y/3)^3}\bigg).
\end{equation}
\end{proof}

\begin{remark}
Suppose that $\alpha=1$. Then $\prox_h$ was discussed in \cite{p-o.net};
however, no explicit formulae were presented. 
For a visualization of $\prox_h$ in this case, see \cref{fig2}. 

\begin{figure}
\centering
   \includegraphics[width=0.5\textwidth]{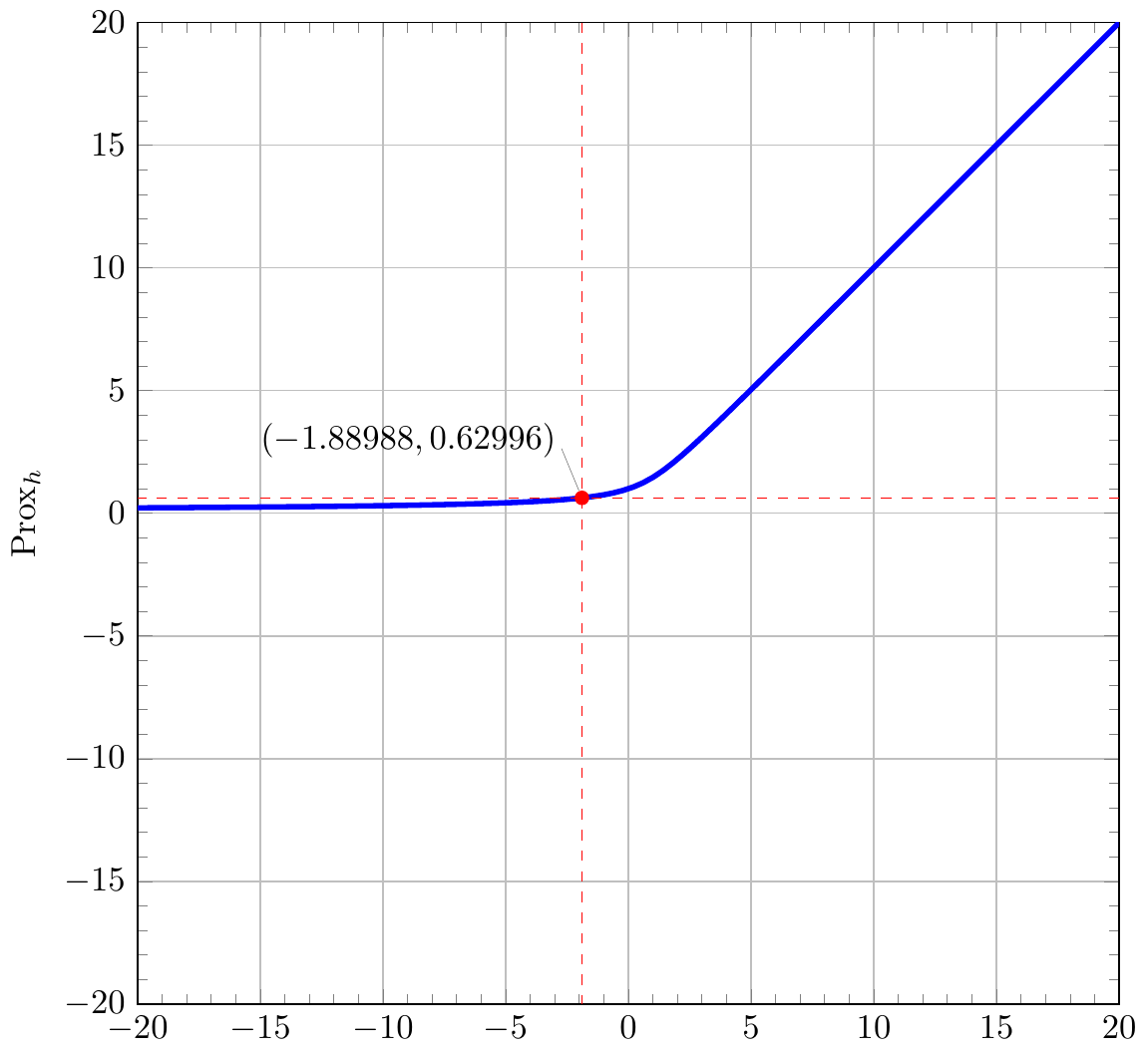}
\caption{A visualization of \cref{p:alpha/x} when $\alpha=1$.
}
\label{fig2}
\end{figure}
\end{remark}

\section{Projection onto epigraph of a parabola}

\label{sec:projepipar}

In this section, we study projection onto the epigraph of the function
\begin{empheq}[box=\mybluebox]{equation}
h\colon \RR^n\to\RR\colon \bx \mapsto \alpha\|\bx\|^2, 
\quad 
\text{where $\alpha> 0$.}
\end{empheq}

\begin{theorem}
\label{t:projepipar}
Set $E := \epi h \subseteq \RR^{n+1}$.
Let $(\by,\eta)\in(\RR^n\times\RR)$. 
If $(\by,\eta)\in E$, then $P_E(\by,\eta)=(\by,\eta)$.
So we assume that 
$(\by,\eta)\in(\RR^n\times\RR)\smallsetminus E$, i.e., 
$\alpha\|\by\|^2>\eta$. Set $\nu := \|\by\|\geq 0$, 
\begin{equation}
p := -\frac{(2\alpha \eta-1)^2}{12\alpha^2}, 
\quad
q := \frac{(2\alpha \eta-1)^3-27\alpha^2\nu^2}{108\alpha^3},
\end{equation}
$\Delta := (p/3)^3+(q/2)^2 = (27\alpha^2{\nu^2}-2(2\alpha\eta-1)^3)\nu^2/(1728\alpha^4)$, and 
\begin{equation}
x := \begin{cases}
 -\dfrac{\alpha \eta+1}{3\alpha} + 
\sqrt[\mathlarger 3]{-q/2 +\sqrt{\Delta}}
+
\sqrt[\mathlarger 3]{-q/2 -\sqrt{\Delta}}, &\text{if $\Delta\geq 0$;}\\[+5mm]
 -\dfrac{\alpha \eta+1}{3\alpha} +
\dfrac{|2\alpha \eta-1|}{3\alpha}\cos\Big(\dfrac{1}{3}\arccos \dfrac{-q/2}{(-p/3)^{3/2}}\Big), &\text{if $\Delta<0$.}
\end{cases}
\end{equation}
Then
\begin{equation}
P_E(\by,\eta) = \Big(\frac{\by}{1+2\alpha x},\eta+x\Big).
\end{equation}
See \cref{fig3} for an illustration for the case $\alpha=1/2$. 
\end{theorem}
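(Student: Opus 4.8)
The plan is to realize the projection as a constrained minimization, collapse it to a single cubic in one real unknown, and then read off the answer from \cref{c:genroots}. Since $h$ is convex, $E=\epi h$ is closed and convex, so $P_E(\by,\eta)$ is the \emph{unique} minimizer of $\tfrac12\|\bx-\by\|^2+\tfrac12(t-\eta)^2$ over $(\bx,t)$ subject to $t\geq\alpha\|\bx\|^2$. Because $(\by,\eta)\notin E$ lies outside a closed convex set, its projection lies on $\bdry E$; hence the constraint is active at the solution, i.e.\ $t=\alpha\|\bx\|^2$. The first-order (KKT) conditions, which are sufficient here by convexity, read $\bx-\by+2\alpha x\bx=\bzero$ and $t=\eta+x$, where $x$ denotes the multiplier $t-\eta\geq0$. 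Thus $\bx=\by/(1+2\alpha x)$ with $1+2\alpha x\geq1>0$, which already exhibits the two coordinates $\by/(1+2\alpha x)$ and $\eta+x$ of the claimed projection. It remains only to determine $x$.

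Substituting $\bx=\by/(1+2\alpha x)$ into the active constraint $\eta+x=\alpha\|\bx\|^2=\alpha\nu^2/(1+2\alpha x)^2$ and clearing denominators yields $(\eta+x)(1+2\alpha x)^2=\alpha\nu^2$, which upon expansion is the cubic $f(x):=ax^3+bx^2+cx+d=0$ fitting the template \cref{e:gencubic}, with $a:=4\alpha^2>0$, $b:=4\alpha(\alpha\eta+1)$, $c:=4\alpha\eta+1$, and $d:=\eta-\alpha\nu^2$. By \cref{e:defx0} one gets $x_0=-b/(3a)=-(\alpha\eta+1)/(3\alpha)$, matching the offset in the statement. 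Writing $s:=2\alpha\eta-1$, a direct computation gives $3ac-b^2=-4\alpha^2 s^2$, so that by \cref{e:defpq} we obtain $p=-(2\alpha\eta-1)^2/(12\alpha^2)$, and after simplification $q=((2\alpha\eta-1)^3-27\alpha^2\nu^2)/(108\alpha^3)$. Finally, using $(p/3)^3=-s^6/(46656\alpha^6)$ and $(q/2)^2=(s^3-27\alpha^2\nu^2)^2/(46656\alpha^6)$, the cross terms telescope to $(p/3)^3+(q/2)^2=\nu^2(27\alpha^2\nu^2-2(2\alpha\eta-1)^3)/(1728\alpha^4)$; these are exactly the $p,q,\Delta$ displayed in the statement.

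It remains to select the correct root and match formulas. Let $\psi(x):=(\eta+x)(1+2\alpha x)^2$, so the equation is $\psi(x)=\alpha\nu^2$. As $\psi$ is a cubic with positive leading coefficient, $\psi(x)>\alpha\nu^2$ for every $x$ strictly to the right of its largest real root $x_{\max}$. Since $(\by,\eta)\notin E$ gives $\psi(0)=\eta<\alpha\nu^2$, we cannot have $x_{\max}<0$ (that would force $\psi(0)>\alpha\nu^2$), so $x_{\max}\geq0$. Taking $x:=x_{\max}$ therefore yields $x\geq0$, $1+2\alpha x>0$, and $t=\eta+x=\alpha\|\bx\|^2\geq0$, so $(\bx,t,x)$ satisfies all KKT conditions and, by uniqueness of the projection, is the solution. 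When $\Delta\geq0$ we are in \cref{c:genroots}\cref{c:genroots1} (if $\Delta>0$) or in \cref{c:genroots}\cref{c:genroots2} (if $\Delta=0$, where one checks $p<0$ and $q<0$, so the simple root is the larger one), and in both cases $x_0+\sqrt[3]{-q/2+\sqrt{\Delta}}+\sqrt[3]{-q/2-\sqrt{\Delta}}$ equals $x_{\max}$. When $\Delta<0$ we are in \cref{c:genroots}\cref{c:genroots3}, whose largest root is $r_0=x_0+2(-p/3)^{1/2}\cos(\theta/3)$; since $2(-p/3)^{1/2}=|2\alpha\eta-1|/(3\alpha)$, this is precisely the stated $\Delta<0$ branch.

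The routine but real obstacle is the algebraic bookkeeping that reduces $q$ and $\Delta$ to the compact forms above. The one genuinely conceptual step is the root selection: the short argument that $\psi(0)<\alpha\nu^2$ forces the largest real root to be nonnegative, which (together with convex sufficiency of KKT and uniqueness of the projection) identifies it as the sought multiplier in every case.
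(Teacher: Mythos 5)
Your proof is correct, and it arrives at exactly the same cubic $4\alpha^2x^3+4\alpha(\alpha\eta+1)x^2+(4\alpha\eta+1)x+\eta-\alpha\nu^2=0$ and the same invocation of \cref{c:genroots}; the differences from the paper lie in how you get to that cubic and how you select the root. The paper cites \cite[Theorem~6.36]{Beck2}, which directly packages the projection onto an epigraph as $P_E(\by,\eta)=(\prox_{xh}(\by),\eta+x)$ with $x>0$ solving $h(\prox_{xh}(\by))-x-\eta=0$; you instead rederive this from scratch via the KKT conditions of the constrained least-squares problem, which makes the argument self-contained at the cost of a little more bookkeeping (active constraint, sign of the multiplier, sufficiency by convexity). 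For root selection, the paper observes that $\varphi(x)=\alpha\nu^2/(1+2\alpha x)^2-x-\eta$ is strictly decreasing on $\RP$ with $\varphi(0)>0$ and $\varphi(x)\to\minf$, so the cubic has \emph{exactly one} positive root, which in the $\Delta<0$ case must be the largest root $r_0$; you instead argue that the largest real root $x_{\max}$ of the cubic is nonnegative because $f(0)=\eta-\alpha\nu^2<0$ and $f>0$ to the right of $x_{\max}$, and then verify that this root satisfies KKT, so uniqueness of the projection identifies it. Both selections land on the same branch formulas. One small point worth making explicit in your write-up: in the boundary case $\Delta=0$ your claim that $q<0$ (so that the simple root $x_0+2\sqrt[3]{-q/2}$ is indeed the larger of the two) does require the short verification that either $\nu=0$ forces $\eta<0$ and hence $(2\alpha\eta-1)^3<0$, or $\nu>0$ forces $q=-\nu^2/(8\alpha)<0$; and when $p=0$ one has $\nu>0$ and hence $\Delta>0$, so the $p=0$ subcase never collides with $\Delta=0$. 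These checks go through, so the argument is complete.
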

\begin{proof}
For $x\geq 0$, we have $x h=x\alpha\|\cdot\|^2$,
$x \nabla h=2\alpha x\Id$,
$\Id+ x \nabla h=(1+2\alpha x)\Id$ and therefore
$\prox_{x h}=(1+2\alpha x)^{-1}\Id$. 
In view of \cite[Theorem~6.36]{Beck2}, we must first find a positive root $x$ of 
$\varphi(x) := h(\prox_{x h}(\by))-x-\eta=\alpha \|\by\|^2/(1+2\alpha x)^2-x-\eta=0$. 
Note that $\varphi(0)>0$, that $\varphi$ is strictly decreasing on $\RP$, and that 
$\varphi(x)\to\minf$ as $x\to\pinf$. 
Hence $\varphi$ has \emph{exactly one} positive root. 
Multiplying by $(1+2\alpha x)^2>0$, where $x>0$, results in the cubic
$\alpha \nu^2-(x+\eta)(1+2\alpha x)^2=0$, 
which must have \emph{exactly one} positive root. 
Re-arranging leads us to 
\begin{equation}
f(x) := 4\alpha^2 x^3 + 4\alpha(\alpha \eta+1)x^2 + (4\alpha \eta+1)x +\eta-\alpha \nu^2=0,
\end{equation}
a cubic which we know has exactly one positive root. 
As in \cref{sec:gencubic}, we set
\begin{equation}
a := 4\alpha^2,
\;\;
b := 4\alpha(\alpha \eta+1),
\;\;
c := 4\alpha \eta+1,
\;\;
d := \eta-\alpha \nu^2 < 0,
\end{equation}
\begin{equation}
p := \frac{3ac-b^2}{3a^2} = -\frac{(2\alpha \eta-1)^2}{12\alpha^2}\leq 0, 
\end{equation}
and 
\begin{equation}
q := \frac{27a^2d+2b^3-9abc}{27a^3} = \frac{8\alpha^3\eta^3-12\alpha^2\eta^2+6\alpha \eta-27\alpha^2\nu^2-1}{108\alpha^3}.
\end{equation}
We then have
\begin{subequations}
\begin{align}
\Delta &:= (p/3)^3+(q/2)^2 = \frac{\big(8\alpha^3\eta^3-12\alpha^2\eta^2+6\alpha \eta-27\alpha^2\nu^2-1\big)^2-(2\alpha \eta-1)^6}{(6\alpha)^6}\\
&=-\frac{\nu^2}{1728\alpha^4}\big(16\alpha^3\eta^3-24\alpha^2\eta^2+12\alpha \eta-27\alpha^2\nu^2-2 \big)\\
&=\frac{\nu^2}{1728\alpha^4}\big(27\alpha^2\nu^2-2(2\alpha\eta-1)^3\big), 
\end{align}
\end{subequations}
as claimed. 
Utilizing \cref{c:genroots}, we have
\begin{subequations}
\begin{equation}
x = -\frac{\alpha \eta+1}{3\alpha} + 
\sqrt[\mathlarger 3]{-q/2 +\sqrt{\Delta}}
+
\sqrt[\mathlarger 3]{-q/2 -\sqrt{\Delta}},
\quad\text{if $\Delta\geq 0$}
\end{equation}
and
\begin{equation}
x = -\frac{\alpha \eta+1}{3\alpha} +
\frac{|2\alpha \eta-1|}{3\alpha}\cos\Big(\frac{1}{3}\arccos \frac{-q/2}{(-p/3)^{3/2}}\Big)
\quad\text{if $\Delta< 0$.}
\end{equation}
\end{subequations}
(Because we know there is \emph{exactly one} positive root, it is clear that we must pick 
$r_0$ in \cref{c:genroots}\cref{c:genroots3} when $\Delta<0$.)
Finally, \cite[Theorem~6.36]{Beck2} yields
\begin{equation}
P_E(\by,\eta) = \big(\prox_{xh}(\by),\eta+x\big)
= \Big(\frac{\by}{1+2\alpha x},\eta+x\Big)
\end{equation}
as claimed. 
\end{proof}

\begin{figure}
\centering
\includegraphics[width=0.75\textwidth]{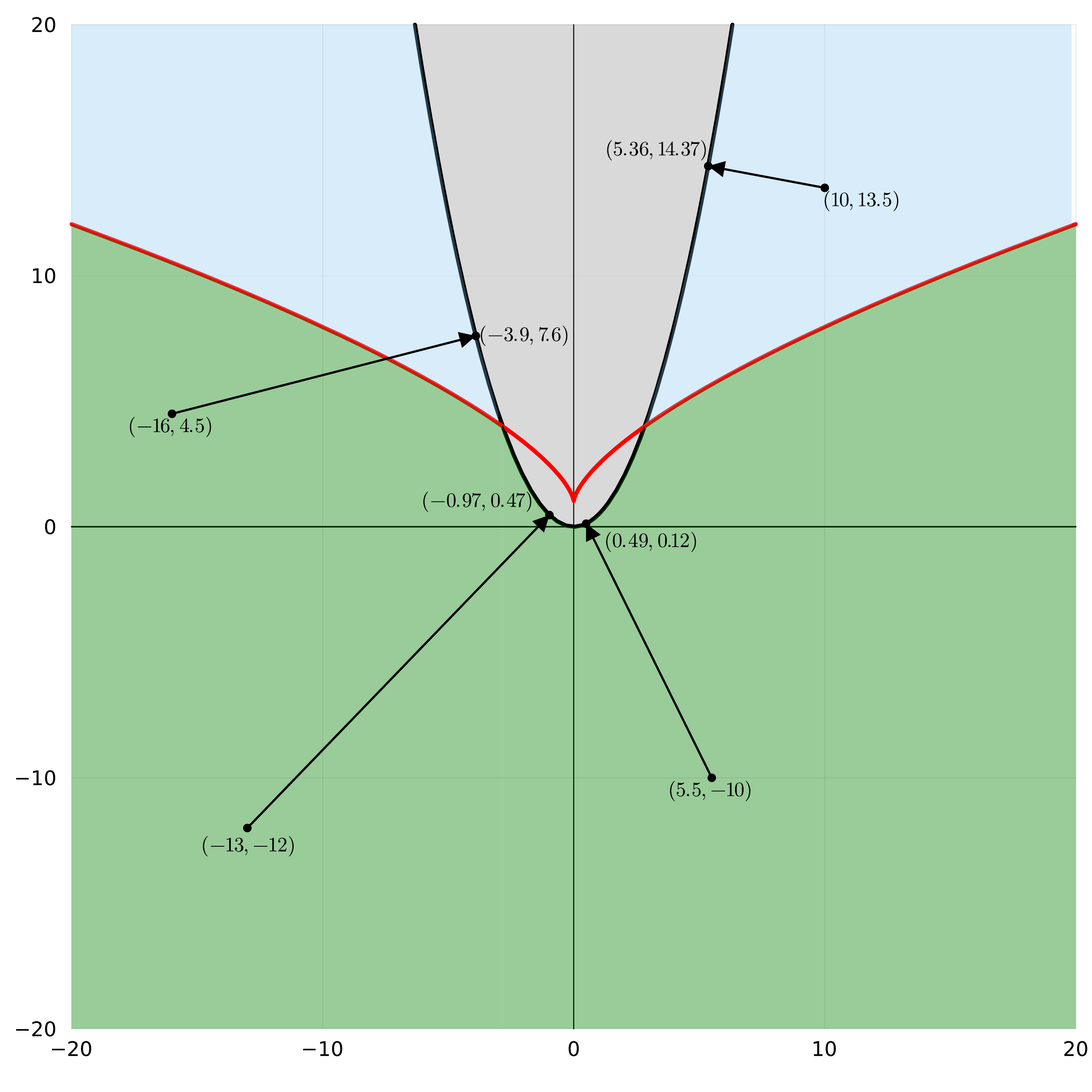}
\caption{A visualization of \cref{t:projepipar} when $n=1$ and $\alpha=1/2$. 
The epigraph is shown in gray. 
The red curve corresponds to $\Delta=0$, the green region to $\Delta<0$ (where trig functions are used) and the blue region to $\Delta>0$.}
\label{fig3}
\end{figure}

\section{On the projection of a rectangular hyperbolic paraboloid }
\label{sec:missing}

In this section, $X$ is a real Hilbert space and we set 
\begin{empheq}[box=\mybluebox]{equation}
S := \menge{(\bx,\by,\gamma)\in X\times X\times\RR}{\scal{\bx}{\by}=\alpha\gamma}, 
\quad 
\text{where $\alpha\in\RR\smallsetminus\{0\}$.}
\end{empheq}
Using the Hilbert product space norm
$\|(\bx,\by,\gamma)\| := \sqrt{\|\bx\|^2+\|\by\|^2+\beta^2\gamma^2}$, 
where $\beta>0$, we are interested 
in finding the projection onto $S$.
Various cases were discussed in \cite{hypar}, 
but 3 were treated only implicitly.
Armed with the cubic, we are now able to treat two of these cases explicitly (the remaining case features a quintic and remains hard).
The first case concerns
\begin{equation}
\label{e:case1bla}
P_S(\bz,-\bz,\gamma),
\quad\text{when}\;\;
\bz\in X\smallsetminus\{0\}\;\text{and}\;
\alpha(\gamma-\alpha/\beta^2)<-\|\bz\|^2/4,
\end{equation}
while the second case is 
\begin{equation}
\label{e:case2bla}
P_S(\bz,\bz,\gamma),
\quad\text{when}\;\;
\bz\in X\smallsetminus\{0\}\;\text{and}\;
\alpha(\gamma+\alpha/\beta^2)>\|\bz\|^2/4.
\end{equation}

\subsection{The case when \cref{e:case1bla} holds} 

\begin{theorem}
Suppose 
$\bz\in X\smallsetminus\{0\}$, set
\begin{equation}
\zeta := \|\bz\|>0,
\end{equation}
and assume that 
\begin{equation}
\label{e:230109a}
\alpha(\gamma-\alpha/\beta^2)<-\zeta^2/4. 
\end{equation}
Set 
\begin{equation}
\label{e:230109b}
p := -\frac{(\alpha+\beta^{2} \gamma )^{2}}{3  \alpha^{2}},
\quad
q := 
\frac{2(\alpha+\beta^2\gamma)^3}{27\alpha^3}
+ \frac{\beta^2\zeta^2}{\alpha^2},
\end{equation}
and 
\begin{equation}
\label{e:230109c}
\Delta := (p/3)^3+(q/2)^2= 
\frac{\beta^2\zeta^2}{\alpha^2}\Big(\frac{\beta^2\zeta^2}{4\alpha^2}+\frac{(\alpha+\beta^2\gamma)^3} {27\alpha^3}\Big). 
\end{equation}
If $\Delta\geq 0$, then set
\begin{equation}
\label{e:xDnonneg}
x := \frac{2\alpha-\beta^2\gamma}{3\alpha}+ \sqrt[\mathlarger 3]{\frac{-q}{2}+ \sqrt{\Delta}}
+
\sqrt[\mathlarger 3]{\frac{-q}{2}-\sqrt{\Delta}}; 
\end{equation}
and if $\Delta<0$, then set
\begin{subequations}
\label{e:xDneg}
\begin{equation}
x := \frac{2\alpha-\beta^2\gamma}{3\alpha}
+\delta\frac{2(\alpha+\beta^2\gamma)}{3\alpha}\cos\bigg(\frac{1}{3}\Big((3+\delta)\pi+\arccos \frac{-q/2}{(-p/3)^{3/2}}\Big) \bigg)
\end{equation}
where 
\begin{equation}
\delta := \sign(\alpha^2+\alpha\beta^2\gamma)\in\{-1,0,1\}.
\end{equation}
\end{subequations}
Then $-1<x<1$ and 
\begin{equation}
P_S(\bz,-\bz,\gamma) = \Big(\frac{\bz}{1-x},\frac{-\bz}{1-x},\gamma+\frac{\alpha x}{\beta^2} \Big).
\end{equation}
\end{theorem}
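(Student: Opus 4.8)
The plan is to realize $P_S(\bz,-\bz,\gamma)$ as the minimizer of the smooth constrained problem
\[
\text{minimize}\;\;\tfrac12\big(\|\bx-\bz\|^2+\|\by+\bz\|^2+\beta^2(\mu-\gamma)^2\big)
\quad\text{subject to}\;\;\scal{\bx}{\by}=\alpha\mu ,
\]
and to read off the first-order (Lagrange) conditions. With multiplier $x$, stationarity gives $\bx-\bz+x\by=0$, $\by+\bz+x\bx=0$, and $\beta^2(\mu-\gamma)-\alpha x=0$. Solving the first two \emph{linear} equations yields $\bx=\bz/(1-x)$ and $\by=-\bz/(1-x)$ — so both are forced to be multiples of $\bz$, which is the structure underlying the announced formula — while the third gives $\mu=\gamma+\alpha x/\beta^2$. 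This already produces the displayed form of $P_S$, with $x$ the Lagrange multiplier; everything then reduces to pinning down $x$.

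Second, I would substitute $\bx,\by,\mu$ into the constraint. Since $\scal{\bx}{\by}=-\zeta^2/(1-x)^2$, the constraint becomes $-\zeta^2=(\alpha\gamma+\alpha^2x/\beta^2)(1-x)^2$, i.e.\ after clearing denominators the cubic
\[
\alpha^2x^3+\alpha(\beta^2\gamma-2\alpha)x^2+\alpha(\alpha-2\beta^2\gamma)x+\beta^2(\alpha\gamma+\zeta^2)=0 .
\]
This fits the pattern of \cref{e:gencubic} with $a=\alpha^2$, $b=\alpha(\beta^2\gamma-2\alpha)$, $c=\alpha(\alpha-2\beta^2\gamma)$, $d=\beta^2(\alpha\gamma+\zeta^2)$, so that $x_0=-b/(3a)=(2\alpha-\beta^2\gamma)/(3\alpha)$. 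A direct computation gives $3ac-b^2=-\alpha^2(\alpha+\beta^2\gamma)^2$, whence the $p,q,\Delta$ of \cref{e:defpq} collapse exactly to \cref{e:230109b} and \cref{e:230109c}; in particular $p\le 0$. Applying \cref{c:genroots} then yields the explicit root(s).

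Third comes the root selection. If $\Delta\ge 0$, \cref{c:genroots}\cref{c:genroots1} returns the unique real root, which is \cref{e:xDnonneg}. If $\Delta<0$, \cref{c:genroots}\cref{c:genroots3} supplies three real roots and I must isolate the right one. The key algebraic identity is $\sign\!\big(\alpha(\alpha+\beta^2\gamma)\big)\cdot 2(\alpha+\beta^2\gamma)/(3\alpha)=2|\alpha+\beta^2\gamma|/(3|\alpha|)=2(-p/3)^{1/2}$, which lets me recast \cref{e:xDneg} in the $x_0+2(-p/3)^{1/2}\cos(\cdot)$ form of \cref{c:genroots}. The phase $\tfrac13\big((3+\delta)\pi+\theta\big)$ then equals $(\theta+4\pi)/3$ when $\delta=1$ and $(\theta+2\pi)/3$ when $\delta=-1$, so the formula picks the middle root when $\alpha(\alpha+\beta^2\gamma)>0$ and the smallest when $\alpha(\alpha+\beta^2\gamma)<0$; the value $\delta=0$ (i.e.\ $p=0$) cannot occur here because then $\Delta=(q/2)^2\ge 0$.

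Finally, the main obstacle is to prove $-1<x<1$ and that this root is genuinely the projection. For the bound I would test the polynomial $\psi(x):=(\alpha\gamma+\alpha^2x/\beta^2)(1-x)^2+\zeta^2$ at the endpoints: one finds $\psi(1)=\zeta^2>0$ and $\psi(-1)=4\alpha(\gamma-\alpha/\beta^2)+\zeta^2$, and the latter is strictly negative \emph{exactly} by the standing hypothesis \cref{e:230109a}. Hence $\psi$ has a root in $\left]-1,1\right[$ by the intermediate value theorem, which forces $1-x>0$ so that the displayed formulas for $P_S$ are well defined. The delicate part I expect to dominate the work is matching this in-interval root with the root produced by \cref{c:genroots} across the sign regimes of $\Delta$ and of $\alpha(\alpha+\beta^2\gamma)$ — thereby confirming that the Lagrange critical point is the global minimizer rather than a spurious stationary point; for the existence and uniqueness of the projection in this regime (and the reduction guaranteeing that the minimizer has $\bx,\by$ collinear with $\bz$) I would rely on the case analysis already set up in \cite{hypar}.
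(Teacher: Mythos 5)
Your overall route is the same as the paper's: reduce to the scalar equation $-\zeta^2=(\alpha\gamma+\alpha^2x/\beta^2)(1-x)^2$ on $\left]-1,1\right[$, clear denominators to obtain the cubic with $a=\alpha^2$, $b=\alpha(\beta^2\gamma-2\alpha)$, $c=\alpha(\alpha-2\beta^2\gamma)$, $d=\alpha\beta^2\gamma+\beta^2\zeta^2$, check that $p,q,\Delta$ collapse to \cref{e:230109b} and \cref{e:230109c}, and then read off the root from \cref{c:genroots}. The paper obtains the scalar equation and the form of $P_S$ directly from \cite[Theorem~4.1(ii)(a)]{hypar} rather than from a Lagrangian; since you ultimately defer to \cite{hypar} for the collinearity reduction and for uniqueness anyway, that difference is cosmetic. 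Your endpoint test $\psi(1)=\zeta^2>0$ and $\psi(-1)=4\alpha(\gamma-\alpha/\beta^2)+\zeta^2<0$ is a nice self-contained way to see why \cref{e:230109a} forces a root in $\left]-1,1\right[$, but it does not give the \emph{uniqueness} of that root, which is exactly what the root-selection step needs; that still comes from \cite{hypar}.

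There are two genuine gaps, both in the root selection. First, your claim that for $\Delta\ge 0$ \cref{c:genroots}\cref{c:genroots1} ``returns the unique real root'' fails when $\Delta=0$ and $p<0$: then \cref{c:genroots}\cref{c:genroots2} applies and there are \emph{two} distinct real roots, $x_0+3q/p$ (simple) and $x_0-3q/(2p)$ (double). Formula \cref{e:xDnonneg} evaluates to the simple one, so you must rule out the double root; the paper does this by computing $x_0-3q/(2p)-1=9\beta^2\zeta^2/\big(2(\alpha+\beta^2\gamma)^2\big)\ge 0$, which places the double root in $\left[1,+\infty\right[$. Second, for $\Delta<0$ you correctly decode \cref{e:xDneg} as the $k=2$ root when $\delta=1$ and the $k=1$ root when $\delta=-1$, but you never prove that this indexed root is the one lying in $\left]-1,1\right[$ --- you explicitly leave it as ``the delicate part''. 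That argument is the substance of the case: the paper computes the critical points $x_\pm=(-b\pm\sqrt{b^2-3ac})/(3a)$ of the cubic, shows that one of them equals exactly $1$ (which one depending on the sign of $\alpha^2+\alpha\beta^2\gamma$), and then combines the interlacing $r_-<x_-<r_0<x_+<r_+$ from \cref{t:genroots}\cref{t:genroots1c} with the uniqueness of the in-interval root to identify the correct index. Until these two arguments are supplied, the stated formulas for $x$ are unverified precisely in the cases where a choice among several real roots has to be made.
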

\begin{proof}
By \cite[Theorem~4.1(ii)(a)]{hypar}, 
there exists a \emph{unique} $x\in\left]-1,1\right[$ such that 
\begin{equation}
\frac{2\zeta^2}{(1-x)^2}+\frac{2\alpha^2x}{\beta^2}+2\alpha\gamma=0; 
\end{equation}
multiplying by 
$\beta^2(1-x)^2/2>0$ yields the cubic
\begin{equation}
f(x) := ax^3+bx^2+cx+d=0
\end{equation}
where 
\begin{equation}
a := 
\alpha^{2}>0,
\;\;
b := 
\alpha \beta^{2} \gamma - 2\alpha^{2},
\;\;
c := 
\alpha^2-2\alpha \beta^{2} \gamma, 
\;\;
d := 
\alpha \beta^{2} \gamma + \beta^{2} \zeta^{2}. 
\end{equation}
Our strategy is to systematically discuss all cases of 
\cref{t:genroots} and then combine cases as much as possible.
As usual, we set 
\begin{equation}
\label{e:230109d}
x_0 := -\frac{b}{3a} = 
\frac{2\alpha-\beta^{2}\gamma }{3 \alpha}
= 
\frac{2\alpha^2-\alpha \beta^{2}\gamma }{3 \alpha^{2}} 
\quad\text{and}\quad
p := \frac{3ac-b^2}{3a^2} = 
-\frac{(\alpha+\beta^{2} \gamma )^{2}}{3  \alpha^{2}} \leq 0,
\end{equation}
and we note that the definition of $p$ is consistent with the one given
in \cref{e:230109b}.
We have the characterization 
\begin{equation}
p = 0 
\;\Leftrightarrow\;
\alpha+\beta^2\gamma=0
\;\Leftrightarrow\;
\gamma=-\alpha/\beta^2. 
\end{equation}
Again as usual, we set 
\begin{equation}
q := \frac{27a^2d+2b^2-9abc}{27a^3} = 
\frac{2(\alpha+\beta^2\gamma)^3}{27\alpha^3}
+ \frac{\beta^2\zeta^2}{\alpha^2},
\end{equation}
which matches \cref{e:230109b}, and of course
\begin{equation}
\Delta := (p/3)^3+(q/2)^2= 
\frac{\beta^2\zeta^2}{\alpha^2}\Big(\frac{\beta^2\zeta^2}{4\alpha^2}+\frac{(\alpha+\beta^2\gamma)^3} {27\alpha^3}\Big),
\end{equation}
which matches \cref{e:230109c}.
We now systematically discuss the case of \cref{t:genroots}.

\noindent
\emph{Case~1:} $p<0$, i.e., $\alpha+\beta^2\gamma\neq 0$ by \cref{e:230109b}.

\emph{Case~1(a):} $p<0$ and $\Delta>0$.\\
Then \cref{t:genroots}\cref{t:genroots1a} and 
the definition $x_0$ in \cref{e:230109d} yield
\cref{e:xDnonneg}. 

\emph{Case~1(b):} $p<0$ and $\Delta=0$.\\
By \cref{t:genroots}\cref{t:genroots1b}, there are two roots,
$x_0+3q/p$ and $x_0-3q/(2p)$, one of which lies in $\left]-1,1\right[$.
Now 
\begin{align*}
x_0-\frac{3q}{2p}-1
&= \frac{2\alpha-\beta^2\gamma}{3\alpha}
-\frac{3}{2}\frac{2(\alpha+\beta^2\gamma)^3+27\alpha\beta^2\zeta^2} {27\alpha^3} 
\frac{-3\alpha^2}{(\alpha+\beta^2\gamma)^2}-1\\
&=
\frac{2\alpha-\beta^2\gamma}{3\alpha}+
\frac{(\alpha+\beta^2\gamma)^3+27\alpha\beta^2\zeta^2/2}{3\alpha(\alpha+\beta^2\gamma)^2}-1\\
&=
\frac{\big((2\alpha-\beta^2\gamma)+(\alpha+\beta^2\gamma)-(3\alpha)\big)}{3\alpha(\alpha+\beta^2\gamma)^2}(\alpha+\beta^2\gamma)^2
+ \frac{27\alpha\beta^2\zeta^2/2}{3\alpha(\alpha+\beta^2\gamma)^2}\\
&= \frac{9\beta^2\zeta^2}{2(\alpha+\beta^2\gamma)^2}\\
&\geq 0; 
\end{align*}
hence the root $x_0-3q/(2p)$ lies in $\left[1,\pinf\right[$ and therefore
our desired root is the remaining one, namely $x_0+3q/p$, which also
allows us to use the representation \cref{e:xDnonneg}.

\emph{Case~1(c):} $p<0$ and $\Delta<0$.\\
According to \cref{t:genroots}\cref{t:genroots1c}, we have three distinct real roots, but there is information about their location. We must locate the root in $\left]-1,1\right[$. 
First, 
$b^2-3ac=(\alpha^2+\alpha\beta^2\gamma)^2$
which yields
$\sqrt{b^2-3ac}=|\alpha^2+\alpha\beta^2\gamma|$. 
This and the definition of $b$ yields
\begin{align*}
x_\pm &:= \frac{-b\pm\sqrt{b^2-3ac}}{3a}
= \frac{2\alpha^2-\alpha\beta^2\gamma\pm |\alpha^2+\alpha\beta^2\gamma|}{3\alpha^2}\\
&=\frac{1}{3\alpha^2}
\frac{(3\alpha^2)+(\alpha^2-2\alpha\beta^2\gamma)\pm\big|(3\alpha^2)-(\alpha^2-2\alpha\beta^2\gamma)\big|}{2}.
\end{align*}
Hence
\begin{equation}
x_-=\frac{\min\{3\alpha^2,\alpha^2-2\alpha\beta^2\gamma\}}{3\alpha^2}
<\frac{\max\{3\alpha^2,\alpha^2-2\alpha\beta^2\gamma\}}{3\alpha^2}=x_+.
\end{equation}
We now bifurcate one last time.

\emph{Case~1(c)($+$):} $p<0$, $\Delta<0$, and $\alpha^2+\alpha\beta^2\gamma>0$.\\
Then $3\alpha^2>\alpha^2-2\alpha\beta^2\gamma$ and
therefore $x_+=1$. 
It follows that our desired root $x$ is the ``middle root'' corresponding to 
$k=2$ in \cref{t:genroots}\cref{t:genroots1c}:
\begin{align*}
x
&=x_0+2(-p/3)^{1/2}\cos\bigg(\frac{1}{3}\Big(4\pi+\arccos \frac{-q/2}{(-p/3)^{3/2}}\Big) \bigg)\\
&=
\frac{2\alpha-\beta^2\gamma}{3\alpha}
+\frac{2|\alpha+\beta^2\gamma|}{3|\alpha|}\cos\bigg(\frac{1}{3}\Big(4\pi+\arccos \frac{-q/2}{(-p/3)^{3/2}}\Big) \bigg)\\
&=
\frac{2\alpha-\beta^2\gamma}{3\alpha}
+\frac{2(\alpha+\beta^2\gamma)}{3\alpha}\cos\bigg(\frac{1}{3}\Big(4\pi+\arccos \frac{-q/2}{(-p/3)^{3/2}}\Big) \bigg),
\end{align*}
where in the last line we used the assumption to deduce that 
$|\alpha+\beta^2\gamma|/|\alpha|
=|\alpha^2+\alpha\beta^2\gamma|/\alpha^2
=(\alpha^2+\alpha\beta^2\gamma)/\alpha^2=
(\alpha+\beta^2\gamma)/\alpha$. 

\emph{Case~1(c)($-$):} $p<0$, $\Delta<0$, and $\alpha^2+\alpha\beta^2\gamma\leq 0$.\\
Then 
$3\alpha^2\leq \alpha^2-2\alpha\beta^2\gamma$ 
and therefore $x_-=1$. 
It follows that our desired root is the ``smallest root'' corresponding to 
$k=1$ in \cref{t:genroots}\cref{t:genroots1c}:
\begin{align*}
x
&=x_0+2(-p/3)^{1/2}\cos\bigg(\frac{1}{3}\Big(2\pi+\arccos \frac{-q/2}{(-p/3)^{3/2}}\Big) \bigg)\\
&=
\frac{2\alpha-\beta^2\gamma}{3\alpha}
+\frac{2|\alpha+\beta^2\gamma|}{3|\alpha|}\cos\bigg(\frac{1}{3}\Big(2\pi+\arccos \frac{-q/2}{(-p/3)^{3/2}}\Big) \bigg)\\
&=
\frac{2\alpha-\beta^2\gamma}{3\alpha}
-\frac{2(\alpha+\beta^2\gamma)}{3\alpha}\cos\bigg(\frac{1}{3}\Big(2\pi+\arccos \frac{-q/2}{(-p/3)^{3/2}}\Big) \bigg),
\end{align*}
where in the last line we used the assumption to deduce that 
$|\alpha+\beta^2\gamma|/|\alpha|
=|\alpha^2+\alpha\beta^2\gamma|/\alpha^2
=-(\alpha^2+\alpha\beta^2\gamma)/\alpha^2=
-(\alpha+\beta^2\gamma)/\alpha$. 

Note that the last two cases can be combined to obtain \cref{e:xDneg}.

\noindent
\emph{Case~2:} $p=0$, i.e., $\alpha+\beta^2\gamma= 0$ by \cref{e:230109b}.
Then $\Delta = (q/2)^2\geq 0$; hence, $\sqrt{\Delta}=|q|/2$ and 
thus $\{-q/2\pm\sqrt{\Delta}\} = \{-q,0\}$.
By \cref{t:genroots}\cref{t:genroots2}, the only real root is 
$x_0+(-q)^{1/3}=x_0+(-q/2+\sqrt{\Delta})^{1/3} + (-q/2-\sqrt{\Delta})^{1/3}$ which is the same as \cref{e:xDnonneg} using \cref{e:230109d}.

\noindent
\emph{Case~3:} $p>0$. In vie of \cref{e:230109b}, this case never occurs.
\end{proof}

\subsection{The case when \cref{e:case2bla} holds} 
\begin{theorem}
Suppose 
$\bz\in X\smallsetminus\{0\}$, set
\begin{equation}
\zeta := \|\bz\|>0,
\end{equation}
and assume that 
\begin{equation}
\label{e:230111a}
\alpha(\gamma+\alpha/\beta^2)>\zeta^2/4. 
\end{equation}
Set 
\begin{equation}
\label{e:230111b}
p := -\frac{(\beta^{2} \gamma - \alpha )^{2}}{3  \alpha^{2}},
\quad
q := 
\frac{2(\beta^2\gamma - \alpha)^3}{27\alpha^3}
- \frac{\beta^2\zeta^2}{\alpha^2},
\end{equation}
and 
\begin{equation}
\label{e:230111c}
\Delta := (p/3)^3+(q/2)^2= 
\frac{\beta^2\zeta^2}{\alpha^2}\Big(\frac{\beta^2\zeta^2}{4\alpha^2}-\frac{(\beta^2\gamma - \alpha)^3} {27\alpha^3}\Big). 
\end{equation}
If $\Delta\geq 0$, then set
\begin{equation}
\label{e:xDnonneg11}
x := -\frac{2\alpha+\beta^2\gamma}{3\alpha}+ \sqrt[\mathlarger 3]{\frac{-q}{2}+ \sqrt{\Delta}}
+
\sqrt[\mathlarger 3]{\frac{-q}{2}-\sqrt{\Delta}}; 
\end{equation}
and if $\Delta<0$, then set
\begin{subequations}
\label{e:xDneg11}
\begin{equation}
x := -\frac{2\alpha+\beta^2\gamma}{3\alpha}
+\delta\frac{2(\alpha-\beta^2\gamma)}{3\alpha}\cos\bigg(\frac{1}{3}\Big((2+2\delta)\pi+\arccos \frac{-q/2}{(-p/3)^{3/2}}\Big) \bigg)
\end{equation}
where 
\begin{equation}
\delta := \sign(\alpha^2-\alpha\beta^2\gamma)\in\{-1,0,1\}.
\end{equation}
\end{subequations}
Then $-1<x<1$ and 
\begin{equation}
P_S(\bz,\bz,\gamma) = \Big(\frac{\bz}{1+x},\frac{\bz}{1+x},\gamma+\frac{\alpha x}{\beta^2} \Big).
\end{equation}
\end{theorem}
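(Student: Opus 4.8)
The plan is to follow the template of the preceding theorem (the case \cref{e:case1bla}), adapted to $P_S(\bz,\bz,\gamma)$. First I would apply the corresponding part of \cite[Theorem~4.1]{hypar} to reduce the projection to a single scalar equation: under \cref{e:230111a} there is a \emph{unique} $x\in\left]-1,1\right[$ solving a scalar equation whose only $x$-dependent denominator is $(1+x)^2$, and from which $P_S(\bz,\bz,\gamma)=\big(\bz/(1+x),\bz/(1+x),\gamma+\alpha x/\beta^2\big)$. Clearing that denominator (legitimate since $x>-1$) produces a cubic $f(x)=ax^3+bx^2+cx+d$ with
\begin{equation*}
a:=\alpha^2,\quad b:=2\alpha^2+\alpha\beta^2\gamma,\quad c:=\alpha^2+2\alpha\beta^2\gamma,\quad d:=\beta^2(\alpha\gamma-\zeta^2),
\end{equation*}
which inherits from \cite{hypar} the decisive fact that it has \emph{exactly one} root in $\left]-1,1\right[$. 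I would record at once that $f(-1)=-\beta^2\zeta^2<0$ while, by \cref{e:230111a}, $f(1)=4(\alpha^2+\alpha\beta^2\gamma)-\beta^2\zeta^2>0$; these endpoint signs drive the location argument below.

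Next I would verify the bookkeeping: $x_0:=-b/(3a)=-(2\alpha+\beta^2\gamma)/(3\alpha)$ (the constant term of \cref{e:xDnonneg11}), and that the $p,q,\Delta$ computed from \cref{e:defpq} agree with \cref{e:230111b} and \cref{e:230111c}. In particular $p=-(\beta^2\gamma-\alpha)^2/(3\alpha^2)\le 0$, so Case~\cref{t:genroots3} ($p>0$) never occurs, and $p=0$ exactly when $\beta^2\gamma=\alpha$. Then I would run through \cref{t:genroots} just as before. When $p=0$ (\cref{t:genroots2}) or $p<0$ with $\Delta>0$ (\cref{t:genroots1a}), the single real root is literally \cref{e:xDnonneg11}. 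When $p<0$ and $\Delta=0$ (\cref{t:genroots1b}) there are a simple and a double root; a short estimate of $x_0-3q/(2p)$ shows the double root falls outside $\left]-1,1\right[$, so the admissible root is the simple one, again of the form \cref{e:xDnonneg11}.

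The heart of the proof is the three-real-root case $p<0$, $\Delta<0$ (\cref{t:genroots1c}), where I must determine \emph{which} root lies in $\left]-1,1\right[$. Here I would compute $b^2-3ac=\alpha^2(\alpha-\beta^2\gamma)^2$, whence $\sqrt{b^2-3ac}=|\alpha^2-\alpha\beta^2\gamma|$ and the critical points of $f$ are $x_\pm=\big(-b\pm|\alpha^2-\alpha\beta^2\gamma|\big)/(3\alpha^2)$. The key observation is that $\Delta<0$ forces $\alpha^2-\alpha\beta^2\gamma<0$, i.e.\ $\delta:=\sign(\alpha^2-\alpha\beta^2\gamma)=-1$, and a direct simplification then gives $x_+=-1$: the local minimum of $f$ sits exactly at the left endpoint. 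Combining this with the interlacing $r_-<x_-<r_0<x_+<r_+$ of \cref{t:genroots1c} and with $f(1)>0$ (which forces $r_+<1$), I conclude that the two smaller roots lie below $-1$ and the admissible root is the largest one $r_+$, i.e.\ the index $k=0$ root. Writing this out, and checking that the coefficient $\delta\,2(\alpha-\beta^2\gamma)/(3\alpha)$ equals the positive number $2(-p/3)^{1/2}$ while $(2+2\delta)\pi=0$, one recovers precisely the $\delta=-1$ instance of \cref{e:xDneg11}; the remaining instances ($\delta\in\{0,+1\}$) do not arise when $\Delta<0$ and are recorded only for uniformity with the previous theorem.

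Finally I would assemble the cases, note that $x\in\left]-1,1\right[$ throughout, and feed $x$ back into \cite[Theorem~4.1]{hypar} to obtain the stated formula for $P_S(\bz,\bz,\gamma)$. The single genuine obstacle is the location step in \cref{t:genroots1c}: recognizing the perfect-square structure of $b^2-3ac$, pinning a critical point to $-1$, and then using interlacing together with the endpoint signs of $f$ to select the correct indexed root and reconcile it with the compact trigonometric formula \cref{e:xDneg11}.
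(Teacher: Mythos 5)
Your proposal is correct and follows essentially the same route as the paper: reduce via \cite[Theorem~4.1(iii)(a)]{hypar} to a cubic with a unique root in $\left]-1,1\right[$, match $x_0,p,q,\Delta$ with \cref{e:230111b}--\cref{e:230111c}, and then walk through the cases of \cref{t:genroots}, using the identity $b^2-3ac=(\alpha^2-\alpha\beta^2\gamma)^2$ to pin a critical point of $f$ to $-1$ and thereby select the admissible root.

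The one genuine difference is in the three-root case. You observe that the factored form \cref{e:230111c} forces $\Delta<0\Rightarrow(\beta^2\gamma-\alpha)^3/\alpha^3>\beta^2\zeta^2/(4\alpha^2)>0$, hence $\alpha^2-\alpha\beta^2\gamma<0$ and $\delta=-1$, so that only $x_+=-1$ and the $k=0$ root need to be treated. This is correct, and it is sharper than the paper, which bifurcates Case~1(c) on the sign of $\alpha^2-\alpha\beta^2\gamma$ and works out the subcase $\delta=+1$ (middle root, $k=2$) as well --- a subcase that is in fact vacuous by your argument. Your route buys a shorter proof and the insight that only the $\delta=-1$ branch of \cref{e:xDneg11} is ever active; the paper's route buys a formula and a derivation that are formally parallel to the preceding theorem without requiring the vacuity observation. (Your side remarks --- the endpoint signs $f(-1)=-\beta^2\zeta^2<0$ and $f(1)>0$ --- are correct but redundant given the uniqueness of the root in $\left]-1,1\right[$ supplied by \cite{hypar}, which is all the paper uses.)
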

\begin{proof}
By \cite[Theorem~4.1(iii)(a)]{hypar}, 
there exists a \emph{unique} $x\in\left]-1,1\right[$ such that 
\begin{equation}
\frac{2\zeta^2}{(1+x)^2}-\frac{2\alpha^2x}{\beta^2}-2\alpha\gamma=0; 
\end{equation}
multiplying by 
$-\beta^2(1+x)^2/2<0$ yields the cubic
\begin{equation}
f(x) := ax^3+bx^2+cx+d=0
\end{equation}
where 
\begin{equation}
a := 
\alpha^{2}>0,
\;\;
b := 
\alpha \beta^{2} \gamma + 2\alpha^{2},
\;\;
c := 
\alpha^2+2\alpha \beta^{2} \gamma, 
\;\;
d := 
\alpha \beta^{2} \gamma - 
\beta^{2} \zeta^{2}. 
\end{equation}
Our strategy is to systematically discuss all cases of 
\cref{t:genroots} and then combine cases as much as possible.
As usual, we set 
\begin{equation}
\label{e:230111d}
x_0 := -\frac{b}{3a} = 
-\frac{2\alpha+\beta^{2}\gamma }{3 \alpha}
= 
-\frac{2\alpha^2+\alpha \beta^{2}\gamma }{3 \alpha^{2}} 
\quad\text{and}\quad
p := \frac{3ac-b^2}{3a^2} = 
-\frac{(\beta^{2} \gamma-\alpha)^{2}}{3  \alpha^{2}} \leq 0,
\end{equation}
and we note that the definition of $p$ is consistent with the one given
in \cref{e:230111b}.
We have the characterization 
\begin{equation}
p = 0 
\;\Leftrightarrow\;
\beta^2\gamma-\alpha=0
\;\Leftrightarrow\;
\gamma=\alpha/\beta^2. 
\end{equation}
Again as usual, we set 
\begin{equation}
q := \frac{27a^2d+2b^2-9abc}{27a^3} = 
\frac{2(\beta^2\gamma-\alpha)^3}{27\alpha^3}
- \frac{\beta^2\zeta^2}{\alpha^2},
\end{equation}
which matches \cref{e:230111b}, and of course
\begin{equation}
\Delta := (p/3)^3+(q/2)^2= 
\frac{\beta^2\zeta^2}{\alpha^2}\Big(\frac{\beta^2\zeta^2}{4\alpha^2}-\frac{(\beta^2\gamma-\alpha)^3} {27\alpha^3}\Big),
\end{equation}
which matches \cref{e:230111c}.
We now systematically discuss the case of \cref{t:genroots}.

\noindent
\emph{Case~1:} $p<0$, i.e., $\beta^2\gamma-\alpha\neq 0$ by \cref{e:230111b}.

\emph{Case~1(a):} $p<0$ and $\Delta>0$.\\
Then \cref{t:genroots}\cref{t:genroots1a} and 
the definition of $x_0$ in \cref{e:230111d} yield
\cref{e:xDnonneg11}. 

\emph{Case~1(b):} $p<0$ and $\Delta=0$.\\
By \cref{t:genroots}\cref{t:genroots1b}, there are two roots,
$x_0+3q/p$ and $x_0-3q/(2p)$, one of which lies in $\left]-1,1\right[$.
Now 
\begin{align*}
x_0-\frac{3q}{2p}+1
&= -\frac{2\alpha+\beta^2\gamma}{3\alpha}
-\frac{3}{2}\frac{2(\beta^2\gamma-\alpha)^3-27\alpha\beta^2\zeta^2} {27\alpha^3} 
\frac{-3\alpha^2}{(\beta^2\gamma-\alpha)^2}+1\\
&=
-\frac{2\alpha+\beta^2\gamma}{3\alpha}+
\frac{(\beta^2\gamma-\alpha)^3-27\alpha\beta^2\zeta^2/2}{3\alpha(\beta^2\gamma-\alpha)^2}+1\\
&=
\frac{\big((-2\alpha-\beta^2\gamma)+(\beta^2\gamma-\alpha)+(3\alpha)\big)}{3\alpha(\beta^2\gamma-\alpha)^2}(\beta^2\gamma-\alpha)^2
-\frac{27\alpha\beta^2\zeta^2/2}{3\alpha(\beta^2\gamma-\alpha)^2}\\
&= -\frac{9\beta^2\zeta^2}{2(\beta^2\gamma-\alpha)^2}\\
&\leq 0; 
\end{align*}
hence the root $x_0-3q/(2p)$ lies in $\left]\minf,-1 \right]$ 
and therefore
our desired root is the remaining one, namely $x_0+3q/p$, which also
allows us to use the representation \cref{e:xDnonneg11}.

\emph{Case~1(c):} $p<0$ and $\Delta<0$.\\
According to \cref{t:genroots}\cref{t:genroots1c}, we have three distinct real roots, but there is information about their location. We must locate the root in $\left]-1,1\right[$. 
First, 
$b^2-3ac=(\alpha^2-\alpha\beta^2\gamma)^2$
which yields
$\sqrt{b^2-3ac}=|\alpha^2-\alpha\beta^2\gamma|$. 
This and the definition of $b$ yields
\begin{align*}
x_\pm &:= \frac{-b\pm\sqrt{b^2-3ac}}{3a}
= \frac{-2\alpha^2-\alpha\beta^2\gamma\pm |\alpha^2-\alpha\beta^2\gamma|}{3\alpha^2}\\
&=\frac{1}{3\alpha^2}
\frac{(-3\alpha^2)+(-\alpha^2-2\alpha\beta^2\gamma)\pm\big|(-3\alpha^2)-(-\alpha^2-2\alpha\beta^2\gamma)\big|}{2}.
\end{align*}
Hence
\begin{equation}
x_-=\frac{\min\{-3\alpha^2,-\alpha^2-2\alpha\beta^2\gamma\}}{3\alpha^2}
<\frac{\max\{-3\alpha^2,-\alpha^2-2\alpha\beta^2\gamma\}}{3\alpha^2}=x_+.
\end{equation}
We now bifurcate one last time.

\emph{Case~1(c)($+$):} $p<0$, $\Delta<0$, and $\alpha^2-\alpha\beta^2\gamma>0$.\\ 
Then $-3\alpha^2<-\alpha^2-2\alpha\beta^2\gamma$ and
therefore $x_-=-1$. 
It follows that our desired root $x$ is the ``middle root'' corresponding to 
$k=2$ in \cref{t:genroots}\cref{t:genroots1c}:
\begin{align*}
x
&=x_0+2(-p/3)^{1/2}\cos\bigg(\frac{1}{3}\Big(4\pi+\arccos \frac{-q/2}{(-p/3)^{3/2}}\Big) \bigg)\\
&=
-\frac{2\alpha+\beta^2\gamma}{3\alpha}
+\frac{2|\beta^2\gamma-\alpha|}{3|\alpha|}\cos\bigg(\frac{1}{3}\Big(4\pi+\arccos \frac{-q/2}{(-p/3)^{3/2}}\Big) \bigg)\\
&=
-\frac{2\alpha+\beta^2\gamma}{3\alpha}
+\frac{2(\alpha-\beta^2\gamma)}{3\alpha}\cos\bigg(\frac{1}{3}\Big(4\pi+\arccos \frac{-q/2}{(-p/3)^{3/2}}\Big) \bigg),
\end{align*}
where in the last line we used the assumption to deduce that 
$|\beta^2\gamma-\alpha|/|\alpha|
=|\alpha\beta^2\gamma-\alpha^2|/\alpha^2
=(\alpha^2-\alpha\beta^2\gamma)/\alpha^2=
(\alpha-\beta^2\gamma)/\alpha$. 

\emph{Case~1(c)($-$):} $p<0$, $\Delta<0$, and $\alpha^2-\alpha\beta^2\gamma\leq 0$.\\ 
Then 
$-3\alpha^2\geq -\alpha^2-2\alpha\beta^2\gamma$ 
and therefore $x_+=-1$. 
It follows that our desired root is the ``largest root'' corresponding to 
$k=0$ in \cref{t:genroots}\cref{t:genroots1c}:
\begin{align*}
x
&=x_0+2(-p/3)^{1/2}\cos\bigg(\frac{1}{3}\Big(\arccos \frac{-q/2}{(-p/3)^{3/2}}\Big) \bigg)\\
&=
-\frac{2\alpha+\beta^2\gamma}{3\alpha}
+\frac{2|\beta^2\gamma-\alpha|}{3|\alpha|}\cos\bigg(\frac{1}{3}\Big(\arccos \frac{-q/2}{(-p/3)^{3/2}}\Big) \bigg)\\
&=
-\frac{2\alpha+\beta^2\gamma}{3\alpha}
+\frac{2(\beta^2\gamma-\alpha)}{3\alpha}\cos\bigg(\frac{1}{3}\Big(\arccos \frac{-q/2}{(-p/3)^{3/2}}\Big) \bigg),
\end{align*}
where in the last line we used the assumption to deduce that 
$|\beta^2\gamma-\alpha|/|\alpha|
=|\alpha\beta^2\gamma-\alpha^2|/\alpha^2
=(\alpha\beta^2\gamma-\alpha^2)/\alpha^2=
(\beta^2\gamma-\alpha)/\alpha$. 

Note that the last two cases can be combined to obtain \cref{e:xDneg11}.

\noindent
\emph{Case~2:} $p=0$, i.e., $\alpha-\beta^2\gamma= 0$ by \cref{e:230111b}.
Then $\Delta = (q/2)^2\geq 0$; hence, $\sqrt{\Delta}=|q|/2$ and 
thus $\{-q/2\pm\sqrt{\Delta}\} = \{-q,0\}$.
By \cref{t:genroots}\cref{t:genroots2}, the only real root is 
$x_0+(-q)^{1/3}=x_0+(-q/2+\sqrt{\Delta})^{1/3} + (-q/2-\sqrt{\Delta})^{1/3}$ which is the same as \cref{e:xDnonneg11} using \cref{e:230111d}.

\noindent
\emph{Case~3:} $p>0$. In view of \cref{e:230111b}, this case never occurs.
\end{proof}

\section{A proximal mapping of a closure of a perspective function}

\label{sec:perspective}

The following completes \cite[Example~24.57]{BC2017} which stopped short of providing solutions for a cubic encountered.
\begin{example}
Define the function $h$ on
$\RR^n\times\RR$ by 
\begin{equation}
h(y,\eta) := 
\begin{cases}
\|y\|^2/(2\eta), &\text{if $\eta>0$;}\\
0, &\text{if $y=0$ and $\eta=0$;}\\
\pinf, &\text{otherwise.}
\end{cases}
\end{equation}
Let $\gamma>0$ and $(y,\eta)\in\RR^n\times\RR$. 
Then 
\begin{equation}
\prox_{\gamma h}(y,\eta) = 
\begin{cases}
(0,0), &\text{if $\|y\|^2+2\gamma\eta\leq 0$;}\\
\Big(\big(1-\frac{\gamma\lambda}{\|y\|} \big)y,\eta+\frac{\gamma\lambda^2}{2} \Big), &\text{if $\|y\|^2+2\gamma\eta>0$,}
\end{cases}
\end{equation}
where 
$p = 2(\eta+\gamma)/\gamma$,
$\Delta = (p/3)^3+(\|y\|/\gamma)^2$, and 
\begin{equation}
\label{e:230106}
\lambda = \begin{cases}
\sqrt[\mathlarger 3]{\dfrac{\|y\|}{\gamma}+\sqrt{\Delta}}
+
\sqrt[\mathlarger 3]{\dfrac{\|y\|}{\gamma}-\sqrt{\Delta}}, &\text{if $\Delta\geq 0$;}\\[+7mm]
2(-p/3)^{1/2}\cos\Big(\dfrac{1}{3}\arccos\dfrac{\|y\|/\gamma}{(-p/3)^{3/2}}\Big), &\text{if $\Delta<0$.}
\end{cases}
\end{equation}
\end{example}
\begin{proof}
The first cases where already provided in \cite[Example~24.57]{BC2017}.
Now assume 
$\|y\|^2+2\gamma\eta>0$. 
It was also observed in \cite[Example~24.57]{BC2017} that if $y=0$, then $\prox_{\gamma h}(y,\eta)=(0,\eta)$. 

So assume also that $y\neq 0$.
It follows from the discussion that in 
\cite[Example~24.57]{BC2017} that 
$\lambda$ is the unique positive solution of 
the already depressed cubic
\begin{equation}
\lambda^3 + \frac{2(\eta+\gamma)}{\gamma}\lambda
-\frac{2\|y\|}{\gamma}=0,
\end{equation}
which is where the discussion in \cite{BC2017} halted. Continuing here, we set
\begin{equation}
p := \frac{2(\eta+\gamma)}{\gamma},
\quad 
q := -\frac{2\|y\|}{\gamma} < 0,
\end{equation}
and $\Delta := (p/3)^3+(q/2)^2$. 
Using \cref{c:roots}, we see that if $\Delta<0$, then 
\begin{equation}
\lambda = 2(-p/3)^{1/2}\cos\Big(\frac{1}{3}\arccos\frac{-q/2}{(-p/3)^{3/2}}\Big)
\end{equation}
while if $\Delta\geq 0$, then 
\begin{equation}
\lambda = 
\sqrt[\mathlarger 3]{\frac{-q}{2}+\sqrt{\Delta}}
+
\sqrt[\mathlarger 3]{\frac{-q}{2}-\sqrt{\Delta}}
\end{equation}
which slightly simplifies to the expression provided in \cref{e:230106}.

Finally, notice that if $y=0$, then 
the assumption that $\|y\|^2+2\gamma\eta>0$ yields 
$\eta>0$; thus, $p>0$, $q=0$, and hence $\Delta>0$. Formally, our $\lambda$ then simplifies to $0$ which conveniently allows us to combine this case with the case $y\neq 0$. 
\end{proof}

\section*{Acknowledgments}
The authors thank Dr.\ Amy Wiebe for referring us to \cite{BPT}. 
HHB is supported by the Natural Sciences and
Engineering Research Council of Canada. MKL was partially supported by SERB-UBC fellowship and NSERC Discovery grants of HHB and XW.


\end{document}